\documentclass[12pt,a4paper]{amsart}

\usepackage[english]{babel}
\usepackage[utf8x]{inputenc}
\usepackage{amsfonts}
\usepackage{amssymb}
\usepackage{amsmath}
\usepackage{amsrefs}
\usepackage{mathrsfs}
\usepackage{graphicx}
\usepackage{framed}
\usepackage{mdframed}
\usepackage[colorinlistoftodos]{todonotes}
\usepackage{color}

\usepackage{dsfont}
\usepackage{stmaryrd}
\usepackage[scr=boondox]{mathalfa}

\usepackage{float}

\usepackage{tikz-cd}

\newtheorem{theorem}{Theorem}[section]
\newtheorem{lemma}[theorem]{Lemma}
\newtheorem{proposition}[theorem]{Proposition}

\newtheorem{corollary}[theorem]{Corollary}
\newtheorem*{theorem*}{Theorem}

\theoremstyle{definition}
\newtheorem{definition}[theorem]{Definition}

\DeclareMathOperator{\Ad}{Ad}

\DeclareMathOperator{\Iso}{I}
\DeclareMathOperator{\Aff}{Aff}
\DeclareMathOperator{\Orth}{O}

\DeclareMathOperator{\GLin}{GL}

\DeclareMathOperator{\Path}{Path}

\newcommand{\MC}[1]{\omega_{{}_{#1}}}
\newcommand{\Lt}[1]{\operatorname{L}_{#1}}
\newcommand{\Rt}[1]{\operatorname{R}_{#1}}

\usepackage[all]{xy}

\usepackage[normalem]{ulem}

\DeclareFontEncoding{T3}{}{}
\DeclareFontSubstitution{T3}{cmr}{m}{n}
\DeclareSymbolFont{tipa}{T3}{cmr}{m}{sl}
\SetSymbolFont{tipa}{bold}{T3}{cmr}{bx}{sl}
\DeclareMathSymbol{\kgf}{\mathord}{tipa}{'255}

\usepackage{hyperref}

\title[Completeness for Cartan geometries]{Inequivalence of the various notions of completeness for Cartan geometries}
\author{Jacob W. Erickson \and Benjamin McKay}
\thanks{This material is based upon work done while the first author was under the partial support of the National Science Foundation under Grant No. 2203493.}

\begin{document}
\begin{abstract}We demonstrate that the three prevailing notions of completeness for Cartan geometries are not actually equivalent and provide methods for constructing many examples in the affine case, modernizing and extending results due to Yeaton Clifton. In an effort to rectify the folklore on the topic, we also provide detailed explanations of how specious proofs in the literature that would claim that these types of completeness are equivalent break down.\end{abstract}

\maketitle

\section{Introduction}
In Riemannian geometry, completeness is a comfortable and fruitful assumption. All compact Riemannian manifolds are complete, and even for the noncompact manifolds, there is no sparsity of examples where the condition holds. Moreover, by the Hopf--Rinow theorem, metric completeness for (torsion-free) Riemannian manifolds happens to be equivalent to geodesic completeness, and since we can convert between the two types of completeness indiscriminately, the assumption gives us plenty of utility for relatively little cost.

While most Cartan geometries, outside the Riemannian case, lack natural metrics on their base manifolds, there are a few ways in which one might hope to recapture the notion of completeness for this wider setting.

First, when a model $(G,H)$ is \emph{reductive}---meaning there exists an $\Ad_H$-invariant subspace $\mathfrak{m}\subseteq\mathfrak{g}$ complementary to $\mathfrak{h}$, so that $\mathfrak{g}=\mathfrak{m}\oplus\mathfrak{h}$ as an $H$-representation---we can define a notion of \emph{geodesic} for Cartan geometries $(\mathscr{G},\omega)$ of type $(G,H)$ by taking exponential curves in $\mathscr{G}$ of the form $t\mapsto\exp(t\omega^{-1}(X))\mathscr{g}$ for $X\in\mathfrak{m}$ and $\mathscr{g}\in\mathscr{G}$, and then projecting down to the base manifold. In the Riemannian case, this definition is equivalent to the usual notion of geodesic, and just like we do for Riemannian manifolds, we can define a reductive Cartan geometry to be \emph{geodesically complete} whenever all of these geodesics exist for all time.

More generally, even outside of the reductive case, we can ask that \textit{all} exponential curves $t\mapsto\exp(t\omega^{-1}(X))\mathscr{g}$, where $\mathscr{g}\in \mathscr{G}$ and $X$ is an arbitrary element of $\mathfrak{g}$, exist for all time; in the modern terminology for Cartan geometries, this condition is simply called \emph{completeness}, though for clarity we will refer to it here as \emph{exponential-completeness}.

Finally, even though there might not be a natural metric on the base manifold, Cartan geometries $(\mathscr{G},\omega)$ of type $(G,H)$ always admit natural Riemannian metrics on their principal $H$-bundle $\mathscr{G}$: for any inner product $\mathrm{g}$ on $\mathfrak{g}$, define $\mathrm{g}_\omega$ to be the Riemannian metric on $\mathscr{G}$ given by $\mathrm{g}_\omega(\xi_1,\xi_2):=\mathrm{g}(\omega(\xi_1),\omega(\xi_2))$. Such metrics are called \emph{$\omega$-constant metrics}, and we can define a Cartan geometry to be \emph{$\omega$-constant metric complete}, or \emph{development-complete} (see Theorem \ref{devcompismetcomp}), whenever one of these metrics is complete; in the Riemannian setting, this is equivalent to metric completeness of the base manifold.

Tragically, while all three of these notions of completeness are valid and meaningful for more general Cartan geometries, most hopes for using them outside of the Riemannian setting are in vain. In a majority of the cases of interest, every relevant notion of completeness seems to be far too restrictive to be applicable outside of a limited number of examples. In the holomorphic category, for example, the results in \cite{McKayComplexComp} tell us that an exponential-complete or development-complete Cartan geometry of parabolic type is necessarily isomorphic to its own model geometry.

Even with this limitation on its applicability, one might still hope that a version of the Hopf--Rinow theorem holds, so that these various notions of completeness happen to coincide in general, just like they do in the Riemannian case. Indeed, this supposed equivalence seems widely accepted in the subject's folklore, and both of the authors of this paper have previously written preprints that included attempts to prove this result. Both such preprints independently followed the same faulty reasoning used by Kobayashi when he introduced this equivalence result via Theorems 5.6 and 5.15 of \cite{Kobayashi1957}, and in an effort to prevent spreading this erroneous result further, we have written this paper to help correct the folklore: in general, none of these three notions of completeness are equivalent.

This inequivalence was essentially already proven by Clifton in \cite{Clifton1966}, though a combination of unclear explanations and outdated machinery and terminology seems to have prevented that paper from receiving the recognition it deserves. We hope that, by translating the content of that paper for a more modern audience, we can also help to rectify this disservice.

Our main results provide recipes for constructing a cornucopia of Cartan geometries that satisfy some completeness assumption without also satisfying a stronger one. For example, we have the following results for affine geometries.

\begin{theorem*}[Theorem \ref{affinecirclethm} below]
For every smooth circle embedding ${\gamma:\mathbb{S}^1\hookrightarrow\mathbb{R}^m\cong\Aff(m)/\GLin_m\mathbb{R}}$, there exists a (smooth) deformation $\phi$ of the Klein geometry $(\Aff(m),\MC{\Aff(m)})$ of type $(\Aff(m),\GLin_m\mathbb{R})$ over $\mathbb{R}^m$, with $\phi$ vanishing outside of some neighborhood of $\gamma(\mathbb{S}^1)$, such that the deformed geometry $(\Aff(m),\MC{\Aff(m)}\!+\phi)$ is geodesically complete but not development-complete.
\end{theorem*}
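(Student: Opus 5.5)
The idea is to use the deformation to put a small amount of torsion near $\gamma(\mathbb{S}^1)$, leaving the curvature zero. This makes the circle (or a nearby closed curve) a trap for developments: a path of bounded $\omega$-length that circles it infinitely often never converges. Geodesics, however, will be arranged to cross the support of $\phi$ only in a controlled way, so they still exist for all time.

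Concretely, I choose a tubular neighborhood $U\cong\mathbb{S}^1\times D^{m-1}$ of $\gamma(\mathbb{S}^1)$. I write $\phi=\tau$ as an $\mathfrak{m}$-valued ($\mathbb{R}^m$-valued) $1$-form on $\Aff(m)$ that is $\GLin_m\mathbb{R}$-equivariant and semibasic, so $\MC{\Aff(m)}+\phi$ is still a Cartan connection. The deformed geometry is then a torsionful affine connection $\nabla$ on $\mathbb{R}^m$ agreeing with the flat one off $U$. Equivalently, I deform only the translational part of the coframe: $\theta\mapsto\theta+\phi$.

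\textbf{Step 1 (destroy development-completeness).} I pick $\phi$ so that, along $\gamma$, the development of one loop around the circle is a nontrivial element of $\Aff(m)$. The model here is a holonomy $g$ acting as a contraction, e.g.\ a homothety $x\mapsto\lambda x$ with $0<\lambda<1$ about a point. Traversing the loop $n$ times then gives a development curve of total $\mathrm{g}_\omega$-length $\sum_k \lambda^k L<\infty$ whose lift does not converge in $\mathscr{G}$, because it keeps winding. So the $\omega$-constant metric is incomplete. By Theorem \ref{devcompismetcomp}, or directly from the definition of development-completeness via lifts of model paths, this gives non-development-completeness. Arranging the holonomy requires $\phi$ to contribute the right $\mathfrak{gl}$ component after integration. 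Since $\phi$ only touches the $\mathbb{R}^m$ part, the holonomy along a loop is an $\Aff(m)$ element generated by the translational discrepancy. I would therefore use a loop whose lift is not horizontal, e.g.\ a curve in $\mathscr{G}$ whose $\omega$-values include a $\mathfrak{gl}$-rotation/scaling, so that the translation defect is rescaled each turn. The geometric series then converges.

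\textbf{Step 2 (geodesic completeness).} Geodesics of $\nabla$ are affine straight lines outside $U$. Inside $U$ they solve $\ddot x=-\Gamma(x)(\dot x,\dot x)$, where only the symmetric part of $\Gamma$ matters. I choose $\phi$ so that its induced torsion is totally antisymmetric in the appropriate sense: $\nabla=\nabla^0+A$ with $A(X,X)=0$. Then geodesics of $\nabla$ coincide with straight lines, hence are complete. This condition is purely algebraic on $\phi$, namely that the $\mathfrak{gl}$-valued correction is skew in the sense $A_XX=0$. I impose it pointwise with a cutoff function supported in $U$.

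\textbf{Main obstacle.} The hard part is reconciling Steps 1 and 2: finding a skew $A$ ($A_XX=0$), supported near the circle, whose parallel transport/development along some finite-length path produces contracting (non-isometric, non-translational) holonomy. I would first try the planar case $m=2$ with $\gamma$ a round circle. There $A_XY=f(x)\,\beta(X)\,JY$-type terms rotate, which is insufficient. So I would instead take $A_XY=f(x)\bigl(\alpha(X)Y-\alpha(Y)X\bigr)$, which is skew and has trace part producing scaling holonomy $\exp(\oint f\alpha)$. Choosing $\alpha=d\vartheta$ along the circle makes $\oint f\alpha\neq0$. Finally I transport this to general $m$ and arbitrary embedded $\gamma$ via the tubular neighborhood, using a $1$-form $\alpha$ with $\oint_\gamma\alpha\ne0$ supported in $U$.
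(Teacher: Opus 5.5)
There is a genuine gap, and it begins with the kind of deformation you choose. As you first set it up, with $\phi$ an $\mathbb{R}^m$-valued semibasic equivariant form and zero curvature (torsion allowed), no choice of $\phi$ can destroy development-completeness. Write $\phi_{(x,A)}=A^{-1}P(x)\,\mathrm{d}x$ with $P$ supported in $U$. The antidevelopment from $(x_0,\mathds{1})$ of a model path $(y,B)$ with $B(0)=\mathds{1}$ is forced to have linear part $A=B$. Its base part then solves $\dot x=(\mathds{1}+P(x))^{-1}\dot y$, whose right-hand side is bounded, so it always exists on $[0,1]$. Equivalently, a curvature-free connection on the simply connected $\mathbb{R}^m$ has trivial linear holonomy, so loops develop to pure translations and a contracting holonomy is impossible. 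Using non-horizontal lifts does not help, because the extra $\mathfrak{gl}_m\mathbb{R}$-component is simply absorbed into $B$. You must deform the $\mathfrak{gl}_m\mathbb{R}$-part, and hence introduce curvature, as you tacitly do in Step 2 and in your final formula. The paper does this in the simplest way: $\phi$ takes values in $\mathbb{R}\mathds{1}$ and equals $-\mathds{1}$ on $\dot\gamma$, which in your notation is $A_XY=\mu(X)Y$. Then the holonomy is an honest homothety, and the development of $\gamma$ has base velocity $e^{-t}\dot\gamma$ and converges. Geodesics are now only reparametrized straight lines, and completeness follows because their speed cannot blow up while the line crosses the compact support of $\phi$.

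Your final skew choice $A_XY=f(\alpha(X)Y-\alpha(Y)X)$ does make geodesics exactly the affine lines, but then Step 1 is unproven, because the holonomy is not the scaling $\exp(\oint f\alpha)$. It is a path-ordered exponential of the non-commuting endomorphisms $f(\alpha(\dot\gamma)\mathds{1}-\dot\gamma\otimes\alpha)$, each of which annihilates $\dot\gamma$. Only its determinant, $\exp(\pm(m-1)\oint f\alpha)$, is controlled, whereas summability of the repeated-loop development needs spectral radius less than $1$. The condition $\oint_\gamma\alpha\neq0$ does not supply this. Take the unit circle in $\mathbb{R}^2$ with inward normal $N$, $f=1$ on the circle, and $\alpha=\mathrm{d}\vartheta+k\,\mathrm{d}r$ with $k>1$. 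Then $B(s)[\dot\gamma\ N]$ evolves by right multiplication by $\exp\big(s\left[\begin{smallmatrix}0&k-1\\1&1\end{smallmatrix}\right]\big)$, whose generator has determinant $1-k<0$. The one-loop holonomy is therefore hyperbolic, and the development diverges in both directions of traversal. Nor can you transport the round-circle case to arbitrary $\gamma$ through a tubular neighborhood, since that diffeomorphism does not preserve the flat structure.

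If you want to keep your skew $A$, the missing idea is to take $\alpha=c\langle\dot\gamma,\cdot\rangle$ along $\gamma$ (arclength, $c>0$). Then $A_{\dot\gamma}$ is $c$ times orthogonal projection onto $\dot\gamma^\perp$, which is symmetric. Hence $|B(t)^{\top}\xi|$ is nondecreasing along the curve, and it strictly increases over a loop because a closed curve cannot have constant tangent. This gives $\|H^{-1}\|<1$ for the one-loop holonomy $H$. Now traverse $\gamma$ in the contracting direction and right-translate by the inverse linear part, as with $\zeta$ in Theorem~\ref{devtrammel}; note the raw lift has infinite length coming from its $\mathfrak{gl}_m\mathbb{R}$-component. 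The result is a finite-length path that does not converge. That route makes geodesic completeness trivial at the cost of this holonomy estimate, while the paper's scalar $\phi$ makes the holonomy trivial at the cost of the reparametrization argument for geodesics.
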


\begin{theorem*}[Theorem \ref{cliftongen} below]
On each Lie group $S$ of dimension at least 2, there exists a homogeneous affine geometry that is geodesically complete but not exponential-complete.\end{theorem*}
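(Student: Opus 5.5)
The plan is to build the geometry as a left-invariant affine connection on $S$: a homogeneous affine geometry on $S$ is equivalent to one on the bundle $\mathscr{G}=S\times\GLin_m\mathbb{R}$, where $m=\dim S$, with $\omega$ determined by a bilinear map $\nabla:\mathfrak{s}\times\mathfrak{s}\to\mathfrak{s}$. Left translations of $S$ act by automorphisms, so the whole problem reduces to algebra on $\mathfrak{s}$.

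\textbf{Step 1 (geodesic completeness).} For a left-invariant connection, a geodesic $\sigma$ with $\sigma(0)=e$ satisfies $\sigma'(t)=(\Lt{\sigma(t)})_*u(t)$, where $u:\mathbb{R}\to\mathfrak{s}$ solves the Euler-type equation $u'=-\nabla_u u$. Only the symmetric part of $\nabla$ enters this equation. I will choose $\nabla$ so that its symmetric part vanishes, $\nabla_XX=0$, for instance $\nabla_XY=c[X,Y]$ for a constant $c$, or more generally any skew bilinear map. Then $u$ is constant, every geodesic is a one-parameter subgroup translate $t\mapsto g\exp(tX)$, and geodesic completeness follows. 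It also follows from homogeneity, since it suffices to check geodesics through $e$, and this holds for every Lie group.

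\textbf{Step 2 (failure of exponential-completeness).} The remaining freedom is the skew part, that is, the torsion, and this is where exponential-completeness can fail. Take $X=(v,A)\in\mathfrak{aff}(m)$ with $A\in\mathfrak{gl}_m\mathbb{R}$. The flow of $\omega^{-1}(X)$ projects to a curve in $S$ whose body velocity $u(t)\in\mathfrak{s}$ evolves by a Riccati-type ODE, roughly
\[
u'=-Au+T(u,u)\text{-type terms},
\]
where the quadratic term comes from the torsion $T(X,Y)=\nabla_XY-\nabla_YX-[X,Y]$ acting through the frame. More precisely, in the frame $g(t)\in\GLin_m\mathbb{R}$ the equations are $g'=gA$ (up to sign conventions) and $u$ equal to $g v$-like, with the correction $\nabla_u u$ replaced by $\nabla_u(gv)$-type terms that no longer vanish. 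I will pick $\nabla$, using $\dim S\ge2$, so that a chosen 2-dimensional subspace yields a scalar equation like $x'=x^2$. For example, I would pick linearly independent $e_1,e_2$ and set $\nabla_{e_1}e_2=-\nabla_{e_2}e_1=\lambda e_1+\ldots$, adjusted by the bracket, and then choose $A$ rotating or shearing so that the $e_1$-component blows up in finite time. Since the exponential curve upstairs then leaves every compact set of $S$ in finite time, or its velocity becomes unbounded, the flow cannot be complete.

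\textbf{Main obstacle.} The hardest step is the explicit computation of the exponential ODE for general $(v,A)$ in the presence of the Lie bracket of $\mathfrak{s}$, which varies with $S$. To keep the answer uniform across all $S$, I would first try the choice $\nabla_XY=[X,Y]$, the ``left'' connection for which left-invariant fields are parallel up to sign. I would also try $\nabla=0$ in the left-invariant frame, which has torsion $-[\cdot,\cdot]$. The catch is that both of these may be too tame, giving complete flows.

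If so, I would add a skew, $\mathfrak{s}$-independent bilinear term $B(X,Y)=\alpha(X)Y-\alpha(Y)X$, with $\alpha\in\mathfrak{s}^*$ nonzero. This term is skew, so Step 1 is untouched. With $A=0$ and $v$ chosen so that $\alpha(v)\ne0$ is paired against a component evolving under $g'=g\,\nabla_{u}$, the contribution $\alpha(u)u$ should produce $x'=\pm x^2$ and hence finite-time blow-up. I expect the bracket contributions to be linear in the relevant component and therefore not to prevent this blow-up. Checking that claim is the key estimate.
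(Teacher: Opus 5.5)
\textbf{There is a genuine gap.} Your Step~1 is correct, but Step~2 carries the whole theorem and is not carried out. The routes you sketch provably fail, and when $\dim S=2$ no choice with $\nabla_XX\equiv 0$ can work.

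Identify $\mathfrak{s}\cong\mathbb{R}^m$ via $\rho_\mathfrak{m}$ and write $B(X,Y):=\nabla_XY$, so that $\rho_\mathfrak{h}(X)=B(X,\cdot)$. An integral curve $(s(t),h(t))$ of $\omega_\rho^{-1}(v,A)$ satisfies
\[\MC{S}(\dot s)=hv,\qquad \dot h=hA-\rho_\mathfrak{h}(hv)\,h .\]
The $S$-equation is always globally solvable. The structure of $S$ enters only through whatever you build into $B$; the torsion formula plays no role. So completeness is decided by the $h$-equation alone.

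For $A=0$ this is the geodesic flow, which your Step~1 already shows is complete. Indeed, with $u=hv$ one has $\dot u=hAv-B(u,u)$, and the $\alpha(u)u$ terms cancel, so no $x'=\pm x^2$ can arise.

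For general $A$, substitute $h=k\,e^{tA}$, $w(t):=e^{tA}v$ and $B_k(X,Y):=k^{-1}B(kX,kY)$. Then $k^{-1}\dot k=-B_k(w(t),\cdot)$, and a direct computation using skewness gives
\[\frac{d}{dt}B_k(X,Y)=B_k\big(w,B_k(X,Y)\big)+B_k\big(X,B_k(Y,w)\big)+B_k\big(Y,B_k(w,X)\big),\]
which is the Jacobiator of $B_k$. If $B$ satisfies the Jacobi identity, so does every $B_k$, hence $B_k\equiv B_{k(0)}$. Then $k$ solves the \emph{linear} equation $\dot k=-k\,B_{k(0)}(w(t),\cdot)$, and every $\omega_\rho$-constant vector field is complete.

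Your candidates $0$, $c[X,Y]$ and $\alpha(X)Y-\alpha(Y)X$ each satisfy Jacobi. When $\dim S=2$, every skew bilinear map does, since the Jacobiator of a skew map is alternating trilinear. So for $\dim S=2$ your construction always produces exp-complete geometries. The only room left is a skew $B$ violating Jacobi, which needs $\dim S\ge 3$ and would still have to be found and analyzed.

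\textbf{The missing idea, forced in dimension $2$, is to keep a nonzero symmetric part.} The paper takes the Clifton block $\rho_\mathfrak{h}(v)=\left[\begin{smallmatrix}0&-v_2\\ v_2&v_1\end{smallmatrix}\right]$, padded by zeros. Its geodesic equation $\dot u=-\rho_\mathfrak{h}(u)u$ is nontrivial but preserves a norm, which gives geodesic completeness by Lemma~\ref{slowandsteady}. (The norm should be $2v_1^2+v_2^2+v_3^2+\cdots$; with the plain dot product, $v^{T}\rho_\mathfrak{h}(v)v=v_1v_2^2$.)

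Non-exp-completeness then comes from a half-fine pair. The complete curve $t\mapsto(\exp(t\xi),e)$ has constant $\omega_\rho$-value $\rho(\xi)$. Up to right $H$-translation, $\exp(t\rho(\xi))$ is a reparametrization of a finite-time piece of $\exp\big(s(\rho_\mathfrak{m}(\xi)+Z)\big)$. Hence the $\omega_\rho$-constant field for $\rho_\mathfrak{m}(\xi)+Z$ runs off at $s=1/\lambda$ (Theorem~\ref{cliftongen+}).

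This mechanism needs the base projection $\int_0^t e^{\tau\rho_\mathfrak{h}(\xi)}\rho_\mathfrak{m}(\xi)\,d\tau$ of $\exp(t\rho(\xi))$ to converge in one time direction. That is impossible when $\rho_\mathfrak{h}(\xi)\rho_\mathfrak{m}(\xi)=0$, because the projection is then the straight line $t\rho_\mathfrak{m}(\xi)$.
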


Let us briefly outline the structure of the paper, before we proceed. We will start by reviewing some existing results and terminology in Section \ref{prelims}. Section \ref{pathapprox} discusses some important subtleties to using the notion of development to approximate solutions to certain ODEs, and in particular, describing how proofs intended to show an equivalence between the three notions of completeness mentioned above fail. In Section \ref{clifton}, we will review and expound upon the two examples due to Clifton in \cite{Clifton1966}, from which we will derive inspiration for our main results in Section \ref{deformations}, which deals with constructing examples via deformations, and in Section \ref{quadvectstuff}, which explains how to extend the idea of the Clifton plane to other homogeneous Cartan geometries.

\section{Preliminaries}\label{prelims}
In this section, we establish some terminology, notation, and existing results that will be useful in the rest of the article. Note, however, that this is would probably work poorly as a first introduction to the topic; for that, we currently recommend either the standard references \cite{Sharpe1997} and \cite{CapSlovakPG1}, or the in-progress work \cite{McKayIntro}.

\subsection{Basic definitions}
A Cartan geometry is a geometric structure that is modeled on a given homogenous one.

\begin{definition}
A \textbf{model geometry} (or \textbf{model}) is a pair $(G,H)$, where $G$ is a Lie group and $H\leq G$ is a closed subgroup such that $G/H$ is connected. In a model $(G,H)$, we call $G$ the model group and $H$ the stabilizer or isotropy subgroup.
\end{definition}

\begin{definition}
A \textbf{Cartan geometry of type $(G,H)$} over a (smooth)\footnote{Throughout, we will always assume our manifolds to be smooth. We will also generally assume that the base manifold $M$ of a Cartan geometry is connected.} manifold $M$ is a pair $(\mathscr{G},\omega)$, where $\mathscr{G}$ is a principal $H$-bundle over $M$, with quotient map always denoted by $q_{{}_H}:\mathscr{G}\to M$ for convenience, and $\omega$ is a $\mathfrak{g}$-valued 1-form, called a \textbf{Cartan connection}, on $\mathscr{G}$ satisfying the following three conditions:
\begin{itemize}
\item For each $\mathscr{g}\in\mathscr{G}$, $\omega_\mathscr{g}:T_\mathscr{g}\mathscr{G}\to\mathfrak{g}$ is a linear isomorphism.
\item For each $h\in H$, $\Rt{h}^*\omega=\Ad_{h^{-1}}\omega$, where $\Rt{h}:\mathscr{g}\mapsto\mathscr{g}h$ denotes right-translation by $h$.
\item For each $Y\in\mathfrak{h}$, the time-$t$ flow of the vector field $\omega^{-1}(Y)$ is given by $\exp(t\omega^{-1}(Y))=\Rt{\exp(tY)}$ for all $t\in\mathbb{R}$.
\end{itemize}
\end{definition}

Riemannian geometry, which we can think of as being modeled on Euclidean geometry $(\Iso(m),\Orth(m))$, where $\Iso(m)\simeq\mathbb{R}^m\rtimes\Orth(m)$ is the group of Euclidean isometries of $\mathbb{R}^m\cong\Iso(m)/\Orth(m)$, gives an archetypal example for how this works. Similarly, we can model manifolds with a choice of affine connection $(M,\nabla)$ on affine geometry $(\Aff(m),\GLin_m\mathbb{R})$, where $\Aff(m)\simeq\mathbb{R}^m\rtimes\GLin_m\mathbb{R}$ is the group of affine transformations of $\mathbb{R}^m\cong\Aff(m)/\GLin_m\mathbb{R}$.

The geometric structure of a model geometry $(G,H)$ can be encoded as a Cartan geometry called the Klein geometry.

\begin{definition}
The \textbf{Klein geometry of type $(G,H)$} is defined to be the Cartan geometry $(G,\MC{G})$ of type $(G,H)$ over $G/H$, where $\MC{G}$ is the Maurer--Cartan form $X_g\mapsto\Lt{g^{-1}*}X_g\in T_{g^{-1}g}G=T_eG=\mathfrak{g}$.
\end{definition}

In essence, a Cartan connection $\omega$ compares the first-order behavior of motion in $\mathscr{G}$ with that of the model Lie group $G$. The notion of development allows us to integrate this comparison over paths and curves.

\begin{definition}
In a Cartan geometry $(\mathscr{G},\omega)$ of type $(G,H)$, consider a (smooth or piecewise smooth)\footnote{We will always assume paths $\gamma:[0,1]\to\mathscr{G}$ are piecewise smooth, and curves $\gamma:I\to\mathscr{G}$, where $I$ is not compact, are smooth.} curve or path $\gamma:I\to\mathscr{G}$, where $I$ is an interval containing $0$. We define its \textbf{development} $\gamma_G:I\to G$ to be the unique curve or path in $G$ satisfying $\gamma_G(0)=e$ and $\omega(\dot{\gamma})=\MC{G}(\dot{\gamma}_G)$.
\end{definition}

Cartan geometries come equipped with a natural notion of geometric curve called an exponential curve, given by flows of $\omega$-constant vector fields $\omega^{-1}(X)$ for some $X\in\mathfrak{g}$.

\begin{definition}
A curve $\gamma:I\to\mathscr{G}$ in a Cartan geometry $(\mathscr{G},\omega)$ of type $(G,H)$ is called an \textbf{exponential curve} when its development $\gamma_G$ is of the form $t\mapsto\exp(tX)$ for some $X\in\mathfrak{g}$. Equivalently, $\gamma$ is an exponential curve if and only if it is given by the flow through $\gamma(0)$ of an $\omega$-constant vector field $\omega^{-1}(X)$ for some $X\in\mathfrak{g}$, so that $\gamma(t)=\exp(t\omega^{-1}(X))\gamma(0)$.
\end{definition}

These tend to be quite natural: in the affine case, for example, exponential curves of the form $t\mapsto\exp(t\omega^{-1}(v))\mathscr{g}$, for $\mathscr{g}\in\mathscr{G}$ and $v\in\mathbb{R}^m\trianglelefteq\mathfrak{aff}(m)$ in the ideal corresponding to translations, correspond to geodesics in the base manifold. For reductive Cartan geometries, we have the following extension of this example.

\begin{definition}
In a Cartan geometry $(\mathscr{G},\omega)$ of type $(G,H)$, where $(G,H)$ is reductive (meaning that $\mathfrak{g}=\mathfrak{m}\oplus\mathfrak{h}$ as a representation of $H$, for some $\mathfrak{m}\subseteq\mathfrak{g}$), a \textbf{geodesic} is an exponential curve of the form $t\mapsto\exp(t\omega^{-1}(X))\mathscr{g}$ for some $X\in\mathfrak{m}$.
\end{definition}

Here and throughout, we will follow the practice of not distinguishing between terminology for curves in the base manifold and those in the overlying principal bundle. This is rather natural and convenient when working with Cartan geometries, and leads to much cleaner wordings for definitions and results. In other words, we will also use the term \textbf{geodesic} to refer to a curve of the form $t\mapsto q_{{}_H}(\exp(t\omega^{-1}(X))\mathscr{g})$, for some $\mathscr{g}\in\mathscr{G}$ and $X\in\mathfrak{m}$, on the base manifold $M$ of a reductive Cartan geometry $(\mathscr{G},\omega)$.

\subsection{Existing results on development}
Given a Cartan geometry $(\mathscr{G},\omega)$ of type $(G,H)$ and an inner product $\mathrm{g}$ on $\mathfrak{g}$, recall that we can define an \textbf{$\omega$-constant metric} $\mathrm{g}_\omega$ on the principal $H$-bundle $\mathscr{G}$ by defining $\mathrm{g}_\omega(\xi_1,\xi_2):=\mathrm{g}(\omega(\xi_1),\omega(\xi_2))$. This imbues $\mathscr{G}$---though generally not $M$---with a natural Riemannian metric structure, hence a natural metric structure \[\mathrm{dist}_{\mathrm{g}_\omega}(\mathscr{g}_1,\mathscr{g}_2):=\!\!\inf_{\substack{\gamma:[0,1]\to\mathscr{G} \\ \gamma(0)=\mathscr{g}_1,\gamma(1)=\mathscr{g}_2}}\!\!\int_0^1\|\omega(\dot{\gamma})\|_\mathrm{g}\mathrm{d}t.\]

Following the ideas of \cite{EnsnaringPaper}, we can use this metric structure to describe how close two piecewise smooth paths are.

\begin{definition}
Fix a point $\mathscr{e}\in\mathscr{G}$. The \textbf{first-order compact-open topology} on \[\Path_\mathscr{e}(\mathscr{G}):=\left\{\gamma\in C^0([0,1];\mathscr{G}):\begin{array}{c}\gamma\text{ is a piecewise smooth} \\ \text{path and }\gamma(0)=\mathscr{e}\end{array}\right\},\] the space of piecewise smooth paths $\gamma:[0,1]\to\mathscr{G}$ starting at $\gamma(0)=\mathscr{e}$, is the topology induced by the metric \[d_{\mathrm{g}_\omega}(\gamma_1,\gamma_2):=\sup_{t\in[0,1]}\big(\mathrm{dist}_{\mathrm{g}_\omega}(\gamma_1(t),\gamma_2(t))\big)+\sup_{t\in[0,1]}\|\omega(\dot{\gamma}_1(t))-\omega(\dot{\gamma}_2(t))\|_\mathrm{g},\] where the second supremum is taken over all $t\in[0,1]$ such that both $\dot{\gamma}_1(t)$ and $\dot{\gamma}_2(t)$ are well-defined.
\end{definition}

Throughout, whenever we mention convergence in $\Path_\mathscr{e}(\mathscr{G})$, we will mean convergence with respect to this topology. For example, we use this convention in the following lemma, which gives a version of the usual ``continuous dependence on parameters'' results for ODEs that is applicable in this context.

\begin{lemma}[Lemma 3.2 in \cite{EnsnaringPaper}]\label{cdop}
Suppose that $(f_\lambda)$ is a net of functions from $[0,1]$ to $\mathfrak{g}$ converging uniformly to some function $f_\infty:[0,1]\to\mathfrak{g}$. If there exists a path $\gamma_\infty\in\Path_\mathscr{e}(\mathscr{G})$ satisfying $\omega(\dot{\gamma}_\infty)=f_\infty$, and paths $\gamma_\lambda\in\Path_\mathscr{e}(\mathscr{G})$ satisfying $\omega(\dot{\gamma}_\lambda)=f_\lambda$ for each $\lambda$, then $(\gamma_\lambda)$ converges to $\gamma_\infty$ in $\Path_\mathscr{e}(\mathscr{G})$.
\end{lemma}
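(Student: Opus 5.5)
The plan is to control two quantities separately: the $\omega$-derivative term $\sup_t\|\omega(\dot\gamma_\lambda(t))-\omega(\dot\gamma_\infty(t))\|_{\mathrm{g}}$, and the pointwise distance term $\sup_t\mathrm{dist}_{\mathrm{g}_\omega}(\gamma_\lambda(t),\gamma_\infty(t))$.

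The derivative term is immediate. By hypothesis $\omega(\dot\gamma_\lambda)=f_\lambda$ and $\omega(\dot\gamma_\infty)=f_\infty$, so the second supremum in $d_{\mathrm{g}_\omega}(\gamma_\lambda,\gamma_\infty)$ equals $\sup_t\|f_\lambda(t)-f_\infty(t)\|_\mathrm{g}$. This tends to zero by uniform convergence.

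The work is in the distance term. I would reduce it to a local ODE estimate along the compact image $K=\gamma_\infty([0,1])$.

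First, cover $K$ by finitely many coordinate charts. In each chart I trivialize $T\mathscr{G}$ using the frame $\omega^{-1}(e_i)$, for a basis $(e_i)$ of $\mathfrak{g}$. In such a chart the equation $\omega(\dot\gamma)=f$ reads $\dot x=A(x)f(t)$, with $A$ smooth. Since $f_\infty$ is bounded, the $f_\lambda$ are eventually uniformly bounded. So on a slightly shrunken compact neighborhood $\bar U$ of $K$, the right-hand side is uniformly Lipschitz in $x$, with a constant $L$ independent of $\lambda$.

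Second, I would use a Grönwall argument in the metric $\mathrm{dist}_{\mathrm{g}_\omega}$, which avoids chart-switching. Since $\mathrm{g}_\omega$ is smooth and $\bar U$ is compact, there is $\varepsilon_0>0$ such that any two points of $\bar U$ within distance $\varepsilon_0$ are joined by a unique minimizing geodesic. Also, $\rho(t)=\mathrm{dist}_{\mathrm{g}_\omega}(\gamma_\lambda(t),\gamma_\infty(t))$ is then Lipschitz, with upper derivative at most $\|f_\lambda-f_\infty\|_\infty + C\rho$. This comes from the first variation formula, comparing the $\omega$-constant fields $\omega^{-1}(f_\lambda(t))$ and $\omega^{-1}(f_\infty(t))$ at nearby points. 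The constant $C$ is a bound for the covariant derivative of $\omega^{-1}(X)$ over $\bar U$ and $\|X\|\le \sup\|f_\infty\|+1$. Equivalently, one can do it chartwise: $|x_\lambda-x_\infty|'\le \|A\|\,|f_\lambda-f_\infty|+L|x_\lambda-x_\infty|$, with metric and coordinate distances comparable on compacta. Grönwall then gives
\[\rho(t)\le \|f_\lambda-f_\infty\|_\infty\, e^{Ct},\]
as long as $\rho<\varepsilon_0$ on $[0,t]$ and $\gamma_\lambda$ stays in $\bar U$.

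Finally, a continuity/bootstrap argument removes that proviso. Choose $\lambda$ large enough that $\|f_\lambda-f_\infty\|_\infty e^{C}<\varepsilon_0/2$. Let $T$ be the supremum of the times up to which $\rho<\varepsilon_0$ and $\gamma_\lambda\in\bar U$. At $T$ the estimate gives $\rho\le\varepsilon_0/2$, which forces $T=1$. I would take $\varepsilon_0$ smaller than the distance from $K$ to $\partial\bar U$. The piecewise smoothness of the paths causes no trouble, since the estimate is applied on each smooth piece and the constants are uniform. Hence $\sup_t\rho\to0$, and $d_{\mathrm{g}_\omega}(\gamma_\lambda,\gamma_\infty)\to0$.

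The hypothesis that $\gamma_\lambda$ exists on all of $[0,1]$ is used only so that the bootstrap concerns a path that is already given; the argument shows it cannot leave $\bar U$.

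The main obstacle is this bootstrap step: making the Lipschitz bound uniform and preventing $\gamma_\lambda$ from escaping the compact neighborhood where that bound holds.
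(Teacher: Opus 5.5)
The paper does not prove this lemma; it imports it as Lemma 3.2 of \cite{EnsnaringPaper}, so there is no in-paper argument to compare against. Judged on its own, your proof is correct. It is a self-contained proof of the classical continuous-dependence theorem for the time-dependent fields $\omega^{-1}(f_\lambda(t))$.

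Running Gr\"onwall intrinsically in $\mathrm{dist}_{\mathrm{g}_\omega}$ is the natural choice, because $\omega$ is a fibrewise isometry for $\mathrm{g}_\omega$. In the first-variation estimate the forcing term is then exactly $\|f_\lambda(t)-f_\infty(t)\|_\mathrm{g}$. The constant $C$ only needs to bound, over the $\varepsilon_0$-neighbourhood of $K$, the covariant derivative of the single field $\omega^{-1}(f_\infty(t))$, which is linear in $f_\infty(t)$. Your chart step is therefore superfluous. Two small remarks:
Gr\"onwall with $\rho(0)=0$ actually gives $\rho(t)\le\|f_\lambda-f_\infty\|_\infty(e^{Ct}-1)/C$, which your bound dominates on $[0,1]$.
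Since $\rho$ is Lipschitz, an a.e.\ derivative bound suffices, so the $\lambda$-dependent breakpoints of the piecewise smooth paths need no separate care.

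What your route adds beyond the citation is transparency about which hypothesis does the work. The compact set is $\gamma_\infty([0,1])$, and the bootstrap shows that, for $\lambda$ far enough along the net, $\gamma_\lambda$ cannot escape before time $1$. So its existence on $[0,1]$ is essentially automatic once the limit $\gamma_\infty$ exists. This is the mechanism Section \ref{pathapprox} relies on: approximation via development succeeds exactly when the limiting antidevelopment exists. Lemma \ref{cpctconv} supplies that existence from a compactness hypothesis on the approximants rather than on the limit.
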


This lemma tells us, in particular, that if we have a sequence $(\gamma_k)$ in $\Path_\mathscr{e}(\mathscr{G})$ converging to some $\gamma_\infty\in\Path_\mathscr{e}(\mathscr{G})$, then the developments $((\gamma_k)_G)$ converge to $(\gamma_\infty)_G$ in $\Path_e(G)$. In Section \ref{pathapprox}, we will investigate when this implication is reversible, so that convergence of paths in the model geometry can tell us something about convergence of their antidevelopments.

\begin{definition}
Given a path $\gamma\in\Path_e(G)$, a path $\gamma_\mathscr{G}\in\Path_\mathscr{e}(\mathscr{G})$ is called the \textbf{antidevelopment} of $\gamma$ from $\mathscr{e}\in\mathscr{G}$ when $(\gamma_\mathscr{G})_G=\gamma$.
\end{definition}

Note that, while developments from a Cartan geometry $(\mathscr{G},\omega)$ into the model Lie group $G$ always exist, antidevelopments from $G$ to $(\mathscr{G},\omega)$ are not guaranteed to exist in general.
Consider, for example, the Cartan geometry $(q_{{}_{\Orth(2)}}^{-1}(D),\MC{\Iso(2)})$ of type $(\Iso(2),\Orth(2))$ over the unit disk $D$ embedded in the Euclidean plane: whenever we have a path in $\Iso(2)$ whose projection to $\Iso(2)/\Orth(2)\cong\mathbb{R}^2$ leaves the unit disk centered at the origin, the antidevelopment cannot be well-defined on all of $[0,1]$ because it would need to run off of $D$. This handily brings us to the notions of completeness in the following subsection.

\subsection{Three notions of completeness}
We will say that a geometry is development-complete when antidevelopments are guaranteed to exist.

\begin{definition}
A Cartan geometry $(\mathscr{G},\omega)$ of type $(G,H)$ is called \textbf{development-complete} or \textbf{dev-complete} if and only if, for $\mathscr{e}\in\mathscr{G}$, every $\gamma\in\Path_e(G)$ has a well-defined antidevelopment $\gamma_\mathscr{G}\in\Path_\mathscr{e}(\mathscr{G})$.
\end{definition}

This amounts to the development map $(\cdot)_G:\Path_\mathscr{e}(\mathscr{G})\to\Path_e(G)$ given by $\gamma\mapsto\gamma_G$ being a homeomorphism, by Lemma \ref{cdop} above.

Assuming---as we always will here---that our base manifold $M$ is connected, development-completeness is independent of the choice of starting element $\mathscr{e}\in\mathscr{G}$ for the antidevelopments in $\mathscr{G}$. For $h\in H$, if $\gamma\in\Path_e(G)$ has an antidevelopment $\gamma_\mathscr{G}\in\Path_\mathscr{e}(\mathscr{G})$, then $\gamma$ also has an antidevelopment $(h\gamma h^{-1})_\mathscr{G}h\in\Path_{\mathscr{e}h}(\mathscr{G})$, since \[((h\gamma h^{-1})_\mathscr{G}h)_G=h^{-1}((h\gamma h^{-1})_\mathscr{G})_Gh=h^{-1}h\gamma h^{-1}h=\gamma.\] Similarly, given a path $\delta:[0,1]\to\mathscr{G}$ from $\delta(0)=\mathscr{e}'$ to $\delta(1)=\mathscr{e}$ for $\mathscr{e},\mathscr{e}'\in\mathscr{G}$, $\gamma\in\Path_e(G)$ has an antidevelopment $\gamma_\mathscr{G}\in\Path_\mathscr{e}(\mathscr{G})$ if and only if the concatenation $\delta_G\star(\delta_G(1)\gamma)\in\Path_e(G)$ has antidevelopment $\delta\star\gamma_\mathscr{G}\in\Path_{\mathscr{e}'}(\mathscr{G})$, where the concatenation $c_1\star c_2:[0,1]\to\mathscr{G}$ of two paths $c_1,c_2:[0,1]\to\mathscr{G}$ such that $c_1(1)=c_2(0)$ is defined by \[(c_1\star c_2)(t):=\left\{\begin{array}{ll} c_1(2t) & \text{if }t\in[0,\tfrac{1}{2}], \\ c_2(2t-1) & \text{if }t\in[\tfrac{1}{2},1].\end{array}\right.\]

By restricting the types of paths for which we require well-defined antidevelopments, we get weaker notions of completeness. For example, a geometry is called (exponential-)complete when all of its exponential curves are well-defined on all of $\mathbb{R}$.

\begin{definition}
A Cartan geometry $(\mathscr{G},\omega)$ of type $(G,H)$ will be called \textbf{exponential-complete} or \textbf{exp-complete} if and only if all of its $\omega$-constant vector fields are complete, meaning that for each $X\in\mathfrak{g}$, the flow given by $t\mapsto\exp(t\omega^{-1}(X))$ is well-defined for all $t\in\mathbb{R}$.
\end{definition}

If $(\mathscr{G},\omega)$ is dev-complete, then it must also be exp-complete, because for each $X\in\mathfrak{g}$, the path $t\mapsto\exp(t\omega^{-1}(X))\mathscr{g}$, if well-defined, is the antidevelopment of $t\mapsto\exp(tX)$ from $\mathscr{g}\in\mathscr{G}$.

When $(G,H)$ is reductive, we can further restrict to just requiring all geodesics be well-defined on all of $\mathbb{R}$.

\begin{definition}
A Cartan geometry $(\mathscr{G},\omega)$ of reductive type $(G,H)$ is \textbf{geodesically complete} if and only if all of the $\omega$-constant vector fields $\omega^{-1}(X)$ for which $X\in\mathfrak{m}$ are complete.
\end{definition}

As with development-completeness and exponential-completeness, an exp-complete Cartan geometry $(\mathscr{G},\omega)$ of reductive type $(G,H)$ is necessarily geodesically complete because all geodesics are exponential curves. In short, dev-complete implies exp-complete, and exp-complete implies geodesically complete.

We have one more notion of completeness, but as we will see, it is precisely equivalent to development-completeness.

\begin{definition}
A Cartan geometry $(\mathscr{G},\omega)$ of type $(G,H)$ is said to be \textbf{$\omega$-constant metric complete} if and only if the metric space on $\mathscr{G}$ induced by an $\omega$-constant metric is complete.
\end{definition}

\begin{theorem}[Theorem 1 of \cite{Clifton1966}, rephrased]\label{devcompismetcomp}
A Cartan geometry $(\mathscr{G},\omega)$ of type $(G,H)$ is $\omega$-constant metric complete if and only if it is development-complete.
\end{theorem}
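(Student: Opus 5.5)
We prove the two implications separately, using the fact that an $\omega$-constant metric $\mathrm{g}_\omega$ on $\mathscr{G}$ and the left-invariant metric $\mathrm{g}_G$ on $G$ induced by the same inner product $\mathrm{g}$ on $\mathfrak{g}$ are related by development. For any path $\gamma$ in $\mathscr{G}$ we have $\|\omega(\dot\gamma)\|_\mathrm{g}=\|\MC{G}(\dot\gamma_G)\|_\mathrm{g}$. Hence development preserves lengths: the $\mathrm{g}_\omega$-length of $\gamma$ equals the $\mathrm{g}_G$-length of $\gamma_G$. Left-invariant metrics on Lie groups are complete, being homogeneous, so $(G,\mathrm{g}_G)$ is a complete Riemannian manifold.

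\emph{Metric complete $\Rightarrow$ dev-complete.} Fix $\gamma\in\Path_e(G)$ and $\mathscr{e}\in\mathscr{G}$. Working one smooth piece at a time, the antidevelopment is the solution of the ODE $\omega(\dot\gamma_\mathscr{G})=\MC{G}(\dot\gamma)$ on $\mathscr{G}$, i.e.\ an integral curve of the time-dependent vector field $\omega^{-1}(f(t))$ with $f=\MC{G}(\dot\gamma)$. Let $[0,T)$ be its maximal interval of existence, and suppose $T\le 1$. The norm $\|f\|_\mathrm{g}$ is bounded on $[0,1]$, so by length preservation the partial curve has finite $\mathrm{g}_\omega$-length, and $\gamma_\mathscr{G}(t)$ is Cauchy as $t\to T$. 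By metric completeness it converges to some $\mathscr{g}\in\mathscr{G}$. Near $\mathscr{g}$, the local existence theorem for the ODE gives solutions on a uniform time interval, so the solution extends past $T$, a contradiction. The only subtlety is this uniformity: it follows because $f$ is bounded and the vector fields $\omega^{-1}(X)$ for $\|X\|\le C$ form a compact family near $\mathscr{g}$. Concatenating over the smooth pieces gives the antidevelopment on all of $[0,1]$.

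\emph{Dev-complete $\Rightarrow$ metric complete.} This is the harder direction, and we would use Hopf--Rinow on $(\mathscr{G},\mathrm{g}_\omega)$. It suffices to show that closed bounded sets are compact, or equivalently that every $\mathrm{g}_\omega$-geodesic extends indefinitely. We would use the boundedness criterion. Let $(\mathscr{g}_k)$ be a sequence within distance $R$ of $\mathscr{e}$, and choose paths $\gamma_k$ from $\mathscr{e}$ to $\mathscr{g}_k$ of length less than $R+1$, parametrized proportionally to arc length. Then $\|\omega(\dot\gamma_k)\|_\mathrm{g}<R+1$, and the developments $(\gamma_k)_G$ lie in the compact closed ball $\bar B_{R+1}(e)\subset G$ with uniformly bounded speed. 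The aim is to show that $\mathscr{g}_k$ has a convergent subsequence. Concretely, consider the map $\mathcal{A}$ sending each path in $G$ from $e$ with speed at most $R+1$ to the endpoint of its antidevelopment from $\mathscr{e}$. This map is well defined by dev-completeness. If it is continuous for uniform convergence of $\MC{G}(\dot\gamma)$, it is continuous on any compact family. The obstacle is that the set of speed-$\le R+1$ paths is not compact in that strong topology; Lemma \ref{cdop} only gives continuity under uniform convergence of derivatives, which is the subtlety Section \ref{pathapprox} addresses.

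To get around this, we would replace the $\gamma_k$ by controlled paths. First reduce to piecewise-exponential paths: by Lemma \ref{cdop}, each $\gamma_k$ is approximated, with endpoints close in $\mathrm{g}_\omega$, by an antidevelopment of a path $t\mapsto$ concatenation of $N_k$ exponential segments $\exp(tX_{k,i})$ with $\|X_{k,i}\|\le R+1$. The cleaner route is to show directly that $\mathrm{g}_\omega$-geodesics are defined for all time. A $\mathrm{g}_\omega$-geodesic $c$ from $\mathscr{e}$ has $\omega(\dot c)=u(t)$ solving an ODE on $\mathfrak{g}$, namely an Euler--Arnold-type equation involving the curvature of $\omega$, with $\|u\|_\mathrm{g}$ constant. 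If $c$ is defined only on $[0,T)$, extend $u$ continuously to $[0,T]$ by bounding $\dot u$, which requires controlling the curvature along $c$. This control is again the obstacle. Failing that, we would take the limit $u$ on $[0,T)$, modify it to a bounded measurable, and then smooth, function on $[0,T]$. Dev-completeness then provides an antidevelopment of the corresponding path in $G$, and Lemma \ref{cdop} applied to approximations on $[0,T-\epsilon]$ shows that $c(t)$ converges as $t\to T$. We expect this closing step, transferring existence of antidevelopments back to convergence of the original curve, to be the main difficulty.
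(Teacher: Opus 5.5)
Your forward direction is the paper's argument: finite $\mathrm{g}_\omega$-length gives a limit by metric completeness, which lets the antidevelopment extend past $T$. You make the uniformity of local existence explicit, and that part is fine.

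The converse, however, is not proved, and the closing route you sketch would fail. Smoothing $u=\omega(\dot c)$ changes the path on $[0,T)$, so the resulting antidevelopment is no longer tied to $c$. Lemma~\ref{cdop} cannot bring you back, because it requires uniform convergence of the $\omega$-velocities, and that is exactly what fails near $T$ when $u$ oscillates.

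The missing idea is that no approximation, curvature bound, or compactness of path families is needed. Argue by contraposition, as the paper does. Take an inextensible constant-speed $\mathrm{g}_\omega$-geodesic $c\colon[0,T)\to\mathscr{G}$ from Hopf--Rinow. By the two facts in your first paragraph (development preserves length, and $G$ with its left-invariant metric is complete), the \emph{unmodified} development $c_G$ converges as $t\to T$, so it extends to a path on $[0,T]$. Any antidevelopment of that path from $c(0)$ would, by uniqueness of solutions, coincide with $c$ on $[0,T)$. It would therefore extend $c$ continuously to $T$, contradicting inextensibility, so the geometry is not dev-complete. You had all the ingredients, but you aimed them at proving that $c$ converges, which is precisely the information you lack.

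Your worry that $u$ need not extend continuously to $T$ is nonetheless legitimate. The paper passes over it when it asserts that the continuous extension of $c_G$ lies in $\Path_e(G)$. It is settled by reparametrization rather than by bounding $\dot u$. Replace $c$ by $c\circ r$ with $r(s)=T\bigl(1-(1-s)^2\bigr)$. Its $\omega$-velocity $r'(s)\,u(r(s))$ tends to $0$ because $\|u\|_\mathrm{g}$ is constant, so the extended development is $C^1$ on $[0,1]$. That is enough if piecewise $C^1$ paths are admitted, and only first derivatives enter Lemma~\ref{cdop} and the first-order topology.

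In the strict piecewise-$C^\infty$ reading, no such repair exists in general. Consider the Clifton half-cylinder, where $(\dot x,\dot y)$ is the velocity $f$ of the path in $\mathbb{R}^2$ rotated by the angle $-1/y$. One computes $\ddot x\le-(\dot y/y)^2+\|\dot f\|$. Hence on each smooth piece $\int(\dot y/y)^2\,\mathrm{d}t\le 2\sup\|f\|+\int\|\dot f\|\,\mathrm{d}t$. Thus $\log y$ stays bounded and every piecewise $C^\infty$ path antidevelops, even though $\mathrm{d}x^{\otimes 2}+\mathrm{d}y^{\otimes 2}$ is incomplete. So the equivalence genuinely depends on allowing $C^1$ paths.
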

\begin{proof}
Suppose $(\mathscr{G},\omega)$ is $\omega$-constant metric complete and $\gamma\in\Path_e(G)$. For every $\tau\in(0,1]$ for which we have a well-defined antidevelopment $\gamma_\mathscr{G}|_{[0,\tau)}:[0,\tau)\to\mathscr{G}$ starting at $\gamma_\mathscr{G}(0)=\mathscr{e}$, $\gamma_\mathscr{G}|_{[0,\tau)}$ has finite $\mathrm{g}_\omega$-length, so it has a unique limit $\lim_{t\rightarrow\tau}\gamma_\mathscr{G}|_{[0,\tau)}(t)$, so $\gamma_\mathscr{G}|_{[0,\tau)}$ can be continuously extended to $[0,\tau]$. As such, we get a well-defined antidevelopment $\gamma_\mathscr{G}\in\Path_\mathscr{e}(\mathscr{G})$ for $\gamma$, hence $(\mathscr{G},\omega)$ is development-complete.

Conversely, suppose $(\mathscr{G},\omega)$ is not $\omega$-constant metric complete. Then, by the Hopf--Rinow theorem, we know that there exists an inextensible constant-speed geodesic $\gamma:[0,1)\to\mathscr{G}$ starting at some $\gamma(0)=\mathscr{e}\in\mathscr{G}$. Because the development $\gamma_G:[0,1)\to G$ has finite $\mathrm{g}_{\MC{G}}\!\!$-length---equal to the constant speed $\|\omega(\dot{\gamma})\|_\mathrm{g}$ of $\gamma$ in $\mathscr{G}$---and the Klein geometry $(G,\MC{G})$ is $\MC{G}$-constant metric complete, $\gamma_G$ continuously extends to a path in $\Path_e(G)$ that has no antidevelopment in $\Path_\mathscr{e}(\mathscr{G})$, so $(\mathscr{G},\omega)$ cannot be development-complete.\mbox{\qedhere}
\end{proof}

\section{Path approximation via development}\label{pathapprox}
As we mentioned above, dev-complete Cartan geometries are always exp-complete, and for reductive Cartan geometries, exp-completeness always implies geodesic completeness. However, none of these three notions of completeness are equivalent in general. Faulty proofs that would seem to show otherwise all hinge, in one way or another, on the same alluring and persuasive idea: approximation of paths using development.

Crucially, this idea of approximating paths through development is only \textit{slightly} wrong, and we believe that the core of it is worth salvaging; with a solid understanding of its limitations, the idea can be made to work in many situations.

\subsection{Intuition and limitations}
Suppose we have an ODE on $\mathbb{R}^m$ of the form $\dot{x}(t)=f(t)$, for some continuous function $f:[0,1]\to\mathbb{R}^m$, and we want to determine whether it has a well-defined solution with the initial condition $x(0)=x_0$. Rather than finding an exact solution, which might be quite difficult, we can instead apply the standard Euler approximation scheme: starting at $x(0)=x_0$, we move for some small amount of time $\delta>0$ along the line through $x_0$ with velocity $f(0)$, and then, once we get to $x_0+\delta f(0)$, we change directions and move for time $\delta$ along the line through $x_0+\delta f(0)$ with velocity $f(\delta)$, until we reach $x_0+\delta f(0)+\delta f(\delta)$, and so on. The result of this iterative procedure, pictured in Figure \ref{euler}, is a concatenation of line segments approximating the solution of our ODE through $x_0$. By picking $\delta$ sufficiently small, we can make this approximation arbitrarily close to our desired solution, and in particular, we can prove that the solution to the ODE starting at $x_0$ is well-defined on $[0,1]$ by showing that these approximations by line segments converge.

\begin{figure}
\centering\includegraphics[width=0.7\textwidth]{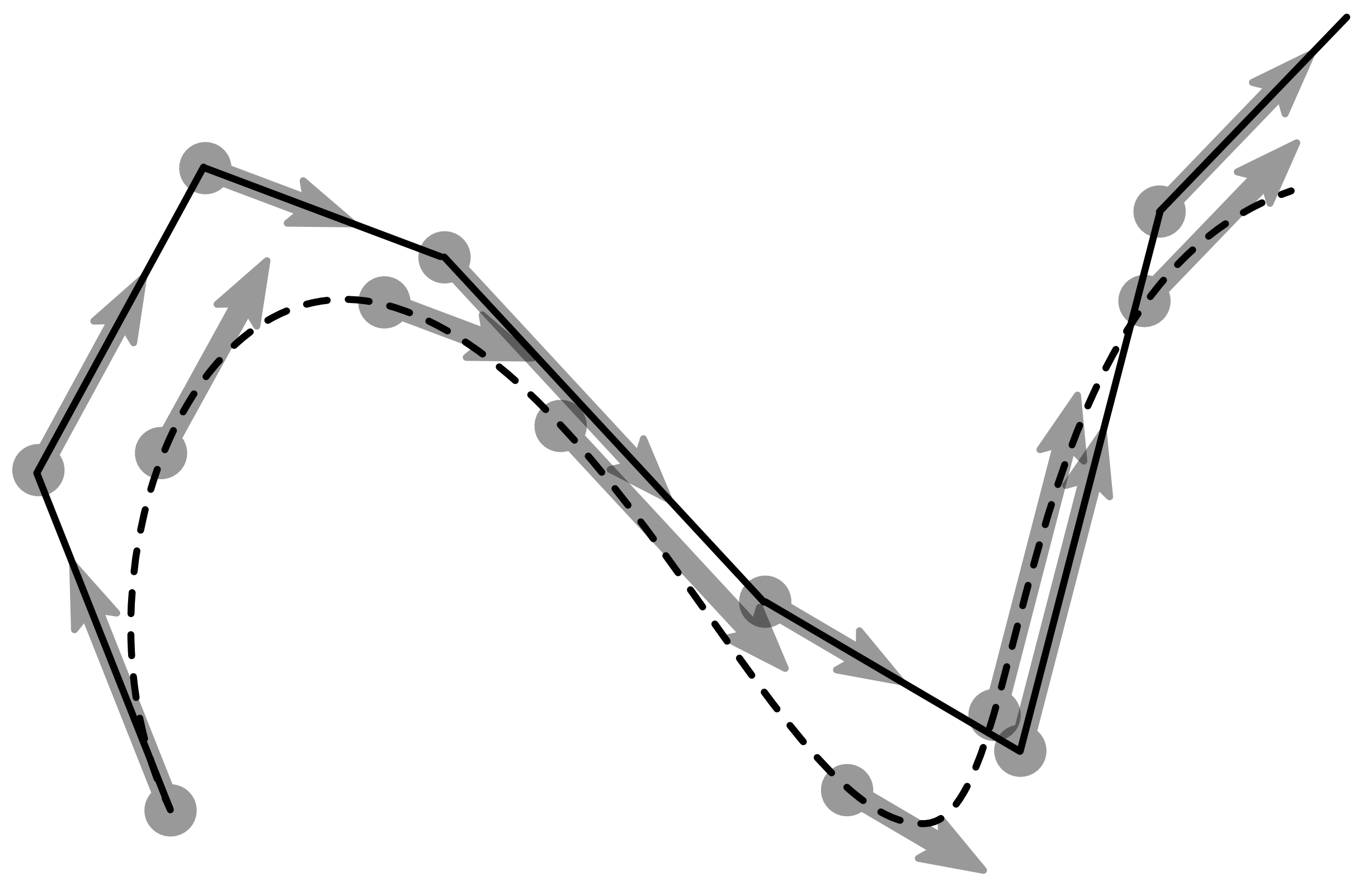}
\caption{An illustration of the Euler approximation method, with the dotted path (representing the solution of an ODE) being approximated by a concatenation of line segments}
\label{euler}
\end{figure}

Let us write this more suggestively: fixing $\delta>0$, we can recursively define $x_{k+1}=\exp(\delta\MC{\mathbb{R}^m}^{-1}(f(k\delta))x_k$, the time-$\delta$ flow of the vector field $\MC{\mathbb{R}^m}^{-1}(f(k\delta))$ starting at $x_k$, so that the path \[\gamma_\delta:t\mapsto\exp\left((t-\delta\lfloor t/\delta\rfloor)\MC{\mathbb{R}^m}^{-1}(f(\delta\lfloor t/\delta\rfloor)\right)x_{\lfloor t/\delta\rfloor}\] gives the corresponding Euler approximation. In other words, Euler approximation works by approximating the solution of an ODE by a concatenation of exponential paths for the Klein geometry $(\mathbb{R}^m,\MC{\mathbb{R}^m})$ of type $(\mathbb{R}^m,\{0\})$.

Of course, this leads us to ask whether a similar procedure works with arbitrary Lie groups $G$, and hence to the following result.

\begin{proposition}\label{geuler}
Suppose that $f:[0,1]\to\mathfrak{g}$ is a piecewise smooth function. If, for each $\delta>0$, we define $\gamma_\delta\in\Path_e(G)$ by \begin{align*}\gamma_\delta(t) & :=g_{\lfloor t/\delta\rfloor}\exp\!\Big((t-\delta\lfloor t/\delta\rfloor)f\big(\delta\lfloor t/\delta\rfloor\big)\Big) \\ & =\exp\!\Big((t-\delta\lfloor t/\delta\rfloor)\,\MC{G}^{-1}\!\big(f(\delta\lfloor t/\delta\rfloor)\big)\Big)g_{\lfloor t/\delta\rfloor},\end{align*} where we recursively set $g_0:=e$ and $g_{k+1}:=g_k\exp(\delta f(k\delta))$, then the paths $\gamma_\delta$ converge as $\delta\rightarrow 0$ to a solution $\gamma\in\Path_e(G)$ for the ODE $\MC{G}(\dot{\gamma})=f$.
\end{proposition}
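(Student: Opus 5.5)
The plan is to apply Lemma \ref{cdop} to the Klein geometry $(G,\MC{G})$, with the base point $\mathscr{e}=e$. First I note that each $\gamma_\delta$ is a piecewise smooth path in $\Path_e(G)$. On each interval $[k\delta,(k+1)\delta)$ it is a left translate by $g_k$ of an exponential curve, so its Maurer--Cartan derivative is constant there:
\[\MC{G}(\dot{\gamma}_\delta(t))=f_\delta(t):=f\big(\delta\lfloor t/\delta\rfloor\big).\]
It is also continuous at the break points $t=k\delta$, since $g_{k-1}\exp(\delta f((k-1)\delta))=g_k$. So $\gamma_\delta$ is exactly the solution of the ODE $\MC{G}(\dot{\gamma}_\delta)=f_\delta$ starting at $e$, where $f_\delta$ is a step function.

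Second, I need a solution $\gamma\in\Path_e(G)$ of $\MC{G}(\dot{\gamma})=f$ on all of $[0,1]$. This is where the model group differs from a general Cartan geometry, and it is the step needing care. The ODE $\dot{\gamma}=\Lt{\gamma*}f(t)$ is a time-dependent left-invariant equation on $G$.
\begin{itemize}
\item Local solutions exist on each smooth piece of $f$ by standard ODE theory.
\item A solution through $g$ is the left translate by $g$ of the solution through $e$. Hence the existence time near any starting point is uniform, depending only on a compact neighborhood of $e$ and on $\sup\|f\|$.
\item Therefore solutions extend to all of $[0,1]$, and we can concatenate across the finitely many break points of $f$.
\end{itemize}
Alternatively, I can quote that $(G,\MC{G})$ is $\MC{G}$-constant metric complete: a left-invariant metric on a Lie group is complete. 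Theorem \ref{devcompismetcomp} then gives that every piecewise smooth path has an antidevelopment in the Klein geometry. I would need to phrase this as the existence of a path with prescribed $\MC{G}(\dot{\gamma})=f$, which is the same local-extension argument.

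Third, I check uniform convergence $f_\delta\to f$ as $\delta\to 0$. The obstacle is that $f$ is only piecewise smooth, so it may jump. At a jump point $s$, the value $f_\delta(t)$ for $t$ just after $s$ is a value of $f$ from before the jump, so the convergence is \emph{not} uniform near jumps. I would handle this as follows.
\begin{itemize}
\item If $f$ is continuous, it is uniformly continuous on $[0,1]$, so $\|f_\delta-f\|_\infty\to 0$. Lemma \ref{cdop} then gives $\gamma_\delta\to\gamma$ in $\Path_e(G)$, indeed in the first-order compact-open topology.
\item In the general case, I would weaken the target to uniform ($C^0$) convergence of the paths. On each small interval of length $\delta$ containing a discontinuity, $f_\delta$ and $f$ are both bounded, so the paths move by only $O(\delta)$ in $\mathrm{dist}_{\mathrm{g}_{\MC{G}}}$ there.
\item Away from these $O(\delta)$-length intervals, $f_\delta\to f$ uniformly on the closed smooth pieces. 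I would apply Lemma \ref{cdop}, or a Gr\"onwall-type estimate using left-invariance of the metric, piece by piece. Then I would combine the pieces using the fact that left translation is an isometry for $\mathrm{g}_{\MC{G}}$, so the errors at the seams propagate without amplification beyond a constant factor.
\end{itemize}
Since there are finitely many jumps, the total error tends to $0$.

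The main obstacle I expect is precisely this error propagation across jumps. I would resolve it most cleanly by perturbing: replace $f$ by continuous $\tilde f$ agreeing with $f$ outside $\eta$-neighborhoods of the jumps, and compare $\gamma_\delta$ and $\gamma$ with the corresponding paths for $\tilde f$. The differences are controlled by $\eta\sup\|f\|$, via the $L^1$ closeness of the integrands, and letting $\eta\to 0$ finishes the argument.
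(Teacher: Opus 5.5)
Your argument is correct, and its analytic core is the same as the paper's, but the packaging differs. You prove existence of $\gamma$ by left-translation and then apply Lemma \ref{cdop} inside $G$. The paper does neither directly. It regards $(G,\MC{G})$ as a Cartan geometry of type $(\mathfrak{g},\{0\})$, with $\mathfrak{g}$ viewed as an abelian group. Since $\MC{G}$-constant metric completeness does not depend on the model, Theorem \ref{devcompismetcomp} makes this geometry development-complete. So the development map $\Path_e(G)\to\Path_0(\mathfrak{g})$ is a homeomorphism, and convergence of the flat Euler polygons $t\mapsto\int_0^t f_\delta$ in $\mathfrak{g}$ pulls back to convergence of the $\gamma_\delta$. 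This one trick gives both existence of $\gamma$ (as the antidevelopment of $t\mapsto\int_0^t f$) and the convergence. It is also exactly how to make rigorous the alternative at the end of your second step: the rephrasing you say you would need comes from changing the model to $(\mathfrak{g},\{0\})$. Your version simply unpacks the homeomorphism into its two ingredients, existence plus Lemma \ref{cdop}.

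Your remark about jumps is a genuine improvement on the paper. In the first-order compact-open topology, convergence of the Euler polygons in $\mathfrak{g}$ requires $\sup\|f_\delta-f\|\to 0$, and this fails whenever $f$ jumps. So the paper's appeal to Euler's method converging in $\mathfrak{g}$ is valid only for continuous $f$, or only for $C^0$ convergence, just as you found. The paper only applies the proposition to smooth paths, where your continuous case suffices.

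For the discontinuous case there is a shorter route than perturbing $f$. Set $\sigma_\delta:=\gamma_\delta\gamma^{-1}$; then $\MC{G}(\dot{\sigma}_\delta)=\Ad_{\gamma}(f_\delta-f)$. By bounded convergence, $\|f_\delta-f\|_{L^1}\to 0$, and $\Ad_{\gamma}$ is bounded on the compact image of $\gamma$. Hence $\sigma_\delta\to e$ uniformly, and so $\gamma_\delta=\sigma_\delta\gamma\to\gamma$ uniformly.
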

\begin{proof}
Since $(G,\MC{G})$ is trivially development-complete as a geometry of type $(G,\{e\})$, it is $\MC{G}$-constant metric complete by Theorem \ref{devcompismetcomp}. But metric completeness does not depend on the model used: thinking of $\mathfrak{g}$ as an abelian group under vector addition, $\MC{G}$ also makes $(G,\MC{G})$ into a Cartan geometry of type $(\mathfrak{g},\{0\})$, and since $(G,\MC{G})$ is $\MC{G}$-constant metric complete for this model, it is also development-complete for this model. Thus, the development map $(\cdot)_\mathfrak{g}:\Path_e(G)\to\Path_0(\mathfrak{g})$ is a homeomorphism, so since the Euler method converges in $\mathfrak{g}\cong\mathbb{R}^{\dim(\mathfrak{g})}$ for piecewise smooth $f:[0,1]\to\mathfrak{g}$, the antidevelopments $\gamma_\delta$ of the Euler approximations in $\mathfrak{g}$ must converge in $\Path_e(G)$.\mbox{\qedhere}
\end{proof}

Therefore, whenever we have a smooth path $\gamma\in\Path_e(G)$, we can always approximate it by a concatenation $\gamma_\delta\in\Path_e(G)$ of exponential paths. Consequently, if we wanted to show that $\gamma$ has a well-defined antidevelopment $\gamma_\mathscr{G}\in\Path_\mathscr{e}(\mathscr{G})$ in a Cartan geometry $(\mathscr{G},\omega)$ of type $(G,H)$, then one way to do it would be to use this approximation by exponential paths to find it: if the antidevelopments $(\gamma_\delta)_\mathscr{G}\in\Path_\mathscr{e}(\mathscr{G})$ are well-defined and converge in $\Path_\mathscr{e}(\mathscr{G})$, then their limit will be $\gamma_\mathscr{G}$ by continuous dependence on parameters.

Moreover, whenever $(\mathscr{G},\omega)$ is exp-complete, we already know that the antidevelopments $(\gamma_\delta)_\mathscr{G}\in\Path_\mathscr{e}(\mathscr{G})$ must be well-defined because they are concatenations of exponential paths. This is the (valid) key idea behind the various (faulty) proofs of the supposed equivalence between exp-completeness and dev-completeness, which we distill into the following corollary of Proposition \ref{geuler}.

\begin{corollary}
Suppose that $(\mathscr{G},\omega)$ is a Cartan geometry of type $(G,H)$ and $\gamma\in\Path_e(G)$ is a smooth path in $G$. If $(\mathscr{G},\omega)$ is exp-complete, then for each $\delta>0$, the path $\gamma_\delta\in\Path_e(G)$ defined in Proposition \ref{geuler} by concatenating exponential paths admits a well-defined antidevelopment $(\gamma_\delta)_\mathscr{G}\in\Path_\mathscr{e}(\mathscr{G})$. Furthermore, if these antidevelopments converge in $\Path_\mathscr{e}(\mathscr{G})$, then the antidevelopment $\gamma_\mathscr{G}\in\Path_\mathscr{e}(\mathscr{G})$ is well-defined and $\lim_{\delta\rightarrow 0}(\gamma_\delta)_\mathscr{G}=\gamma_\mathscr{G}$.
\end{corollary}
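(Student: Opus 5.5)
The corollary has two parts, and I will prove them separately, both directly from the definitions and earlier results.

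\textbf{Existence of each $(\gamma_\delta)_\mathscr{G}$.} Fix $\delta>0$. The path $\gamma_\delta$ is a finite concatenation of pieces, one on each interval $[k\delta,\min((k+1)\delta,1)]$. On the $k$th interval it is $t\mapsto g_k\exp\big((t-k\delta)X_k\big)$ with $X_k:=f(k\delta)$, where $f:=\MC{G}(\dot\gamma)$. I build the antidevelopment inductively. Set $\mathscr{g}_0:=\mathscr{e}$, and on the $k$th interval define
\[(\gamma_\delta)_\mathscr{G}(t):=\exp\big((t-k\delta)\,\omega^{-1}(X_k)\big)\mathscr{g}_k,\qquad \mathscr{g}_{k+1}:=\exp\big(\delta\,\omega^{-1}(X_k)\big)\mathscr{g}_k.\]
Exp-completeness guarantees each flow is defined for all times, so this is a well-defined piecewise smooth path starting at $\mathscr{e}$. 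On each interval, $\omega$ of its velocity is the constant $X_k$. This equals $\MC{G}(\dot\gamma_\delta)$ there, because left translation by $g_k$ does not change the Maurer--Cartan form. Since the paths agree at $t=0$ and satisfy the same first-order equation piecewise, uniqueness of developments gives $\big((\gamma_\delta)_\mathscr{G}\big)_G=\gamma_\delta$.

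\textbf{Identifying the limit.} Suppose $(\gamma_\delta)_\mathscr{G}\to\beta$ in $\Path_\mathscr{e}(\mathscr{G})$. The metric $d_{\mathrm{g}_\omega}$ includes the term $\sup\|\omega(\dot\beta_1)-\omega(\dot\beta_2)\|_\mathrm{g}$, so
\[\omega\big(\dot{(\gamma_\delta)}_\mathscr{G}\big)=\MC{G}(\dot\gamma_\delta)\;\longrightarrow\;\omega(\dot\beta)\quad\text{uniformly}.\]
By Proposition~\ref{geuler}, $\gamma_\delta\to\gamma$ in $\Path_e(G)$, and the analogous metric term on $G$ gives $\MC{G}(\dot\gamma_\delta)\to\MC{G}(\dot\gamma)$ uniformly. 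Uniform limits are unique, so $\omega(\dot\beta)=\MC{G}(\dot\gamma)$ almost everywhere, and hence everywhere both are defined. Therefore $\beta_G$ and $\gamma$ both start at $e$ and have the same Maurer--Cartan derivative, so they coincide. This shows $\beta$ is an antidevelopment of $\gamma$ from $\mathscr{e}$. Uniqueness of antidevelopments, again by ODE uniqueness, then gives $\beta=\gamma_\mathscr{G}$ and $\lim_{\delta\to0}(\gamma_\delta)_\mathscr{G}=\gamma_\mathscr{G}$.

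The one point needing care is that Proposition~\ref{geuler} delivers convergence in the first-order compact-open topology, including uniform convergence of derivatives, and not merely pointwise convergence. That is exactly how the proposition is stated, since it comes from the homeomorphism $(\cdot)_\mathfrak{g}$ and the convergence of Euler's method. If that step felt shaky, I could instead appeal to Lemma~\ref{cdop}: the convergence $(\gamma_\delta)_\mathscr{G}\to\beta$ forces the developments $\gamma_\delta$ to converge to $\beta_G$ in $\Path_e(G)$, and since limits in this metric space are unique, $\beta_G=\gamma$.
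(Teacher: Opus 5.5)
Your proposal is correct and follows essentially the paper's own brief justification. Exp-completeness makes the antidevelopment of each concatenation $\gamma_\delta$ of exponential pieces well defined, as in your inductive construction, and the limit is identified with $\gamma_\mathscr{G}$ by continuous dependence on parameters (Lemma~\ref{cdop}), which is your fallback argument, while your primary step just unpacks that lemma via the derivative term of the metric and ODE uniqueness (it needs only $\MC{G}(\dot\gamma_\delta)\to\MC{G}(\dot\gamma)$ uniformly, which holds since $\MC{G}(\dot\gamma)$ is continuous on $[0,1]$).
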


If we could show that these antideveloped Euler approximation paths $(\gamma_\delta)_\mathscr{G}\in\Path_\mathscr{e}(\mathscr{G})$ always converge in $\Path_\mathscr{e}(\mathscr{G})$, then this would prove that exponential-completeness implies development-completeness, as then we would have a well-defined antidevelopment for every smooth path in $\Path_e(G)$, which would give a well-defined antidevelopment for every (piecewise smooth) path in $\Path_e(G)$ after reparametrization. Crucially, however, this convergence is not actually guaranteed, even though the derivatives are well-behaved enough that the developments converge. This is where the faulty proofs of this supposed equivalence fall apart: they erroneously assume that this convergence necessarily happens.

A similar issue arises when we try to get exp-completeness from geodesic completeness. Recall that, whenever $X,Y\in\mathfrak{g}$, we have the Trotter product formula (see Proposition 9.2.14 of \cite{HilgertNeeb2012}) \[\exp\big(t(X+Y)\big)=\lim_{k\rightarrow+\infty}\!\big(\!\exp(\tfrac{t}{k}X)\exp(\tfrac{t}{k}Y)\big)^k.\] In other words, whenever we have an exponential path $\gamma\in\Path_e(G)$ of the form $t\mapsto\exp(t(X+Y))$, we can approximate it by paths $\gamma_k\in\Path_e(G)$ given by $t\mapsto(\exp(\tfrac{t}{k}X)\exp(\tfrac{t}{k}Y))^k$, a composition of exponential paths from $X$ and $Y$.

For $(G,H)$ reductive, each element of $\mathfrak{g}$ can be written as a sum of some $X\in\mathfrak{m}$ and $Y\in\mathfrak{h}$. Therefore, the Trotter product formula tells us that every exponential path in $\Path_e(G)$ can be approximated arbitrarily closely by a composition of exponential paths coming from elements of $\mathfrak{m}$ and $\mathfrak{h}$ separately. But for a Cartan geometry $(\mathscr{G},\omega)$ of type $(G,H)$, the exponential curves $t\mapsto\exp(t\omega^{-1}(Y))\mathscr{g}=\mathscr{g}\exp(tY)$ for $Y\in\mathfrak{h}$ are automatically well-defined for all $t\in\mathbb{R}$, and $(\mathscr{G},\omega)$ is defined to be geodesically complete if and only if the exponential curves of the form $t\mapsto\exp(t\omega^{-1}(X))\mathscr{g}$ for each $X\in\mathfrak{m}$ and $\mathscr{g}\in\mathscr{G}$ are well-defined for all $t\in\mathbb{R}$, so if $(\mathscr{G},\omega)$ is geodesically complete, then the approximating paths $\gamma_k\in\Path_e(G)$ must have well-defined antidevelopments $(\gamma_k)_\mathscr{G}\in\Path_\mathscr{e}(\mathscr{G})$, and if those antidevelopments converge, then they must converge to $\gamma_\mathscr{G}:t\mapsto\exp(t\omega^{-1}(X+Y))\mathscr{e}$.

\begin{proposition}
Suppose $(\mathscr{G},\omega)$ is a Cartan geometry of reductive type $(G,H)$, and $\gamma\in\Path_e(G)$ is an exponential path of the form $t\mapsto\exp(t(X+Y))$ for $X\in\mathfrak{m}$ and $Y\in\mathfrak{h}$. Define $\gamma_k\in\Path_e(G)$ by $\gamma_k(t):=(\exp(\tfrac{t}{k}X)\exp(\tfrac{t}{k}Y))^k$, so that $\gamma=\lim_{k\rightarrow+\infty}\gamma_k$. If $(\mathscr{G},\omega)$ is geodesically complete, then each $\gamma_k$ has a well-defined antidevelopment $(\gamma_k)_\mathscr{G}\in\Path_\mathscr{e}(\mathscr{G})$. Furthermore, if these antidevelopments converge in $\Path_\mathscr{e}(\mathscr{G})$, then the antidevelopment $\gamma_\mathscr{G}\in\Path_\mathscr{e}(\mathscr{G})$ is well-defined and $\lim_{k\rightarrow +\infty}(\gamma_k)_\mathscr{G}=\gamma_\mathscr{G}$.
\end{proposition}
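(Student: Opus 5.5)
The plan is to treat the two claims separately. The first is an explicit antidevelopment by concatenation; the second is continuous dependence on parameters, via Lemma \ref{cdop}.

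For existence, observe that $\gamma_k$ is a concatenation of $2k$ pieces, each a left translate of an exponential path. On the $j$-th subinterval $[\tfrac{j-1}{k},\tfrac{j}{k}]$ write $\gamma_k(t)=g_{j-1}\exp((t-\tfrac{j-1}{k})X)$ or $g'_{j-1}\exp((t-s)Y)$, where the base point is left-multiplied by the value at the start of the piece. Since $\MC{G}$ is left-invariant, $\MC{G}(\dot\gamma_k)$ is piecewise constant, alternating between $X$ and $Y$. The antidevelopment therefore has to satisfy $\omega(\dot{(\gamma_k)}_\mathscr{G})=X$ or $Y$ on the corresponding pieces. I build it inductively, starting at $\mathscr{e}$:
\begin{itemize}
\item on an $X$-piece, flow along $\omega^{-1}(X)$ for the appropriate time, which is defined for all time because $X\in\mathfrak{m}$ and the geometry is geodesically complete;
\item on a $Y$-piece, flow along $\omega^{-1}(Y)$, which by the third axiom of a Cartan connection is $\mathscr{g}\mapsto\mathscr{g}\exp(tY)$ and is defined for all $t$.
\end{itemize}
The concatenation is a piecewise smooth path starting at $\mathscr{e}$. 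By uniqueness of developments its development is $\gamma_k$, since both satisfy the same piecewise-constant ODE with initial value $e$ and match at the breakpoints.

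For the second claim, suppose the $(\gamma_k)_\mathscr{G}$ converge in $\Path_\mathscr{e}(\mathscr{G})$ to some $\beta$. Convergence in the first-order compact-open topology gives uniform convergence of $\omega(\dot{(\gamma_k)}_\mathscr{G})=\MC{G}(\dot\gamma_k)$ to $\omega(\dot\beta)$. The development map is continuous, which is what Lemma \ref{cdop} says when applied to $\mathscr{G}$ and to $G$, using that $\omega(\dot\beta)$ and $\MC{G}(\dot\beta_G)$ coincide by definition. Hence $(\gamma_k)_G\to\beta_G$ in $\Path_e(G)$. Since $(\gamma_k)_G=\gamma_k\to\gamma$ and limits are unique, $\beta_G=\gamma$. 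So $\beta$ is an antidevelopment of $\gamma$, and by uniqueness of solutions of $\omega(\dot{\gamma}_\mathscr{G})=\MC{G}(\dot\gamma)$ with initial value $\mathscr{e}$ it equals $\gamma_\mathscr{G}$. Since $\MC{G}(\dot\gamma)=X+Y$ is constant, $\gamma_\mathscr{G}(t)=\exp(t\omega^{-1}(X+Y))\mathscr{e}$.

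The main obstacle is the step $\gamma=\lim\gamma_k$ in $\Path_e(G)$, because this topology also requires the Maurer--Cartan derivatives to converge uniformly. In the literal parametrization $\MC{G}(\dot\gamma_k)$ is $X+\Ad$-twisted terms rather than $X+Y$, so it does not converge uniformly to $X+Y$. To handle this I will compare only the pointwise values. The Trotter formula gives uniform convergence $\gamma_k(t)\to\gamma(t)$ on $[0,1]$, by the standard estimate $\exp(sX)\exp(sY)=\exp(s(X+Y)+O(s^2))$ uniformly on compacts. Then $\beta_G(t)=\lim(\gamma_k)_G(t)=\gamma(t)$ pointwise, and the identification $\beta_G=\gamma$ needs only this pointwise equality. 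The delicate point is that convergence of the antidevelopments is a hypothesis, so only identification of the limit is required, and the uniform $C^0$ Trotter estimate suffices for that.
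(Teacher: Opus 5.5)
Your argument is the paper's own, given in the paragraph just before the proposition. The approximants are antideveloped by alternately flowing along $\omega^{-1}(X)$, which is complete by geodesic completeness, and along $\omega^{-1}(Y)$, which is right translation by $\exp(tY)$. Any convergent limit is then identified with $\gamma_\mathscr{G}$ by continuous dependence on parameters (Lemma \ref{cdop}).

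However, you conflate two different approximants, and your last paragraph gets their roles backwards.

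The formula $\gamma_k(t)=(\exp(\tfrac{t}{k}X)\exp(\tfrac{t}{k}Y))^k$ defines a smooth pointwise power, not a concatenation. Writing $c(t)=\exp(\tfrac{t}{k}X)\exp(\tfrac{t}{k}Y)$, one has $\MC{G}(\dot\gamma_k)=\sum_{i=0}^{k-1}\Ad_{c(t)^{-i}}\MC{G}(\dot c(t))$. This is not piecewise constant in $\{X,Y\}$, so your existence step does not apply to this path as written.

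Yet this literal path does converge to $\gamma$ in $\Path_e(G)$, contrary to what you claim. Since $c(t)^i=\exp\big(\tfrac{it}{k}(X+Y)+O(1/k)\big)$ for $0\leq i<k$, and $\Ad_{\exp(s(X+Y))}$ fixes $X+Y$, the twisted terms average out to give $\MC{G}(\dot\gamma_k)=X+Y+O(1/k)$ uniformly on $[0,1]$. The path whose Maurer--Cartan derivative alternates and fails to converge uniformly is the concatenation your existence step actually constructs (say $2k$ pieces of length $\tfrac{1}{2k}$ with velocities $2X$ and $2Y$).

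Your pointwise identification of the limit is valid in either reading. Note, though, that for the concatenated approximants the hypothesis of the second claim can never hold unless $X=Y=0$. Convergence of $(\gamma_k)_\mathscr{G}$ in $\Path_\mathscr{e}(\mathscr{G})$ would force $\omega\big(\tfrac{d}{dt}(\gamma_k)_\mathscr{G}\big)=\MC{G}(\dot\gamma_k)$ to converge uniformly, which an alternating derivative cannot do. So in that reading the second claim is vacuous.

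The paper's sketch is equally loose here, calling $\gamma_k$ ``a composition of exponential paths'', so this is not a departure from its argument. Still, your write-up should fix one reading and use it consistently.
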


Like with the case of exp-completeness and dev-completeness above, notice that the existence of these antidevelopments $(\gamma_k)_\mathscr{G}\in\Path_\mathscr{e}(\mathscr{G})$ is not enough alone to guarantee that $\gamma_\mathscr{G}:t\mapsto\exp(t\omega^{-1}(X+Y))\mathscr{e}$ is well-defined, because we would still need some way to show that the approximating antidevelopments always converge, which turns out to not be true in general. Erroneously assuming this convergence is, again, where faulty proofs of an equivalence between geodesic completeness and exp-completeness fall apart.

\subsection{Conditions for successful path approximation}
As we have seen in the previous subsection, continuous dependence on parameters basically tells us that this path approximation idea works for getting antidevelopments precisely when there exists an antidevelopment to approximate; it fails if and only if the approximating antidevelopments fail to converge. Therefore, if we can find conditions that guarantee that the approximating antidevelopments converge, then the idea is still viable under those conditions.

By the usual existence and uniqueness results for ODEs, there is always some $\tau\in(0,1]$ for which $\gamma_\mathscr{G}|_{[0,\tau)}$ is well-defined, so we should expect $\gamma_\mathscr{G}$ to be well-defined if and only if it does not ``escape'' $\mathscr{G}$ before it reaches $\gamma_\mathscr{G}(1)$. In particular, we should expect our antidevelopment to be well-defined when all of the approximating paths are trapped in a compact subspace.

\begin{lemma}\label{cpctconv}
Suppose $(\gamma_k)$ is a sequence in $\Path_\mathscr{e}(\mathscr{G})$ such that the images $\gamma_k([0,1])$ are all contained in a compact subset $C\subseteq\mathscr{G}$. If the developments $(\gamma_k)_G$ converge to some $(\gamma_\infty)_G\in\Path_e(G)$, then $(\gamma_k)$ converges to a path $\gamma_\infty\in\Path_\mathscr{e}(\mathscr{G})$ with development $(\gamma_\infty)_G$.
\end{lemma}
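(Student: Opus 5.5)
Write $f_k:=\MC{G}((\gamma_k)_G^\cdot)=\omega(\dot\gamma_k)$ and $f_\infty:=\MC{G}((\gamma_\infty)_G^\cdot)$. Convergence of $(\gamma_k)_G$ to $(\gamma_\infty)_G$ in $\Path_e(G)$ means in particular that $f_k\to f_\infty$ uniformly on $[0,1]$ (off finitely many break points). The plan is to show that the antidevelopment $\gamma_\infty$ of $(\gamma_\infty)_G$ from $\mathscr{e}$ exists on all of $[0,1]$. Once that is known, Lemma \ref{cdop}, applied with the $f_k$ converging uniformly to $f_\infty$, gives $\gamma_k\to\gamma_\infty$ in $\Path_\mathscr{e}(\mathscr{G})$ directly.

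For existence, let $\tau\in(0,1]$ be the supremum of times up to which the solution $\gamma_\infty$ of $\omega(\dot\gamma_\infty)=f_\infty$, $\gamma_\infty(0)=\mathscr{e}$, is defined. The first step is to show that $\gamma_\infty(t)\in C$ for all $t<\tau$. For fixed $s<\tau$, the restrictions of $\gamma_k$ and $\gamma_\infty$ to $[0,s]$ (reparametrized to $[0,1]$) are paths in $\Path_\mathscr{e}(\mathscr{G})$ with $\omega$-derivatives $f_k|_{[0,s]}\to f_\infty|_{[0,s]}$ uniformly. Lemma \ref{cdop} then gives $\gamma_k\to\gamma_\infty$ uniformly on $[0,s]$ in $\mathrm{dist}_{\mathrm{g}_\omega}$. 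Since $C$ is compact, hence closed, and each $\gamma_k([0,s])\subseteq C$, we get $\gamma_\infty([0,s])\subseteq C$.

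The second step, which I expect to be the main point, is to rule out escape at $\tau$. The restriction $\gamma_\infty|_{[0,\tau)}$ has finite $\mathrm{g}_\omega$-length, at most $\tau\sup\|f_\infty\|_\mathrm{g}$, so it is Cauchy as $t\to\tau$ in $\mathrm{dist}_{\mathrm{g}_\omega}$. Although $\mathscr{G}$ itself may be incomplete, the compact set $C$ is a complete metric space under the restricted distance. Hence $\lim_{t\to\tau}\gamma_\infty(t)$ exists in $C$. This is exactly the argument used in Theorem \ref{devcompismetcomp}, localized to $C$.

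If $\tau<1$, local existence for the ODE at this limit point, which holds in the same way as the short-time existence noted before the lemma, extends $\gamma_\infty$ past $\tau$, contradicting maximality. Hence $\tau=1$, and the same limit argument defines $\gamma_\infty(1)$. The result is a piecewise smooth $\gamma_\infty\in\Path_\mathscr{e}(\mathscr{G})$ with development $(\gamma_\infty)_G$, and the final application of Lemma \ref{cdop} gives the claimed convergence $\gamma_k\to\gamma_\infty$.
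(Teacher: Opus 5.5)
Your proof is correct, and its skeleton matches the paper's. Both take the maximal existence time $\tau$ of the antidevelopment $\gamma_\infty$, apply Lemma~\ref{cdop} on $[0,s]$ for $s<\tau$ to trap $\gamma_\infty([0,\tau))$ in the closed set $C$, and then show that $\gamma_\infty$ has a limit at $\tau$. The difference is in how that limit is produced.

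The paper uses sequential compactness of $C$ to get subsequential limits of $\gamma_\infty(t_\ell)$ as $t_\ell\to\tau$. It then uses the uniform Lipschitz bound $L=\sup_{k,t}\|\omega(\dot\gamma_k(t))\|_\mathrm{g}$ on the approximants to show that every such limit equals $\lim_k\gamma_k(\tau)$, and hence is independent of the chosen sequence. You instead use the Lipschitz bound on $\gamma_\infty$ itself, $\mathrm{dist}_{\mathrm{g}_\omega}(\gamma_\infty(t),\gamma_\infty(t'))\leq|t-t'|\sup\|f_\infty\|_\mathrm{g}$. This makes $\gamma_\infty(t)$ Cauchy as $t\to\tau$, so it converges in the complete metric space $C$; this is the argument of Theorem~\ref{devcompismetcomp} localized to $C$.

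Your version is shorter and no longer needs the $\gamma_k$ once the trapping step is done. It also shows that compactness of $C$ could be weakened to completeness of $C$ under the restricted distance $\mathrm{dist}_{\mathrm{g}_\omega}$. The paper's version instead identifies the endpoint directly as $\lim_k\gamma_k(\tau)$.

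You also make explicit two points the paper leaves implicit. The first is the endpoint case $\tau=1$. The second is the final appeal to Lemma~\ref{cdop}, which turns existence of $\gamma_\infty$ into convergence $\gamma_k\to\gamma_\infty$ in $\Path_\mathscr{e}(\mathscr{G})$.
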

\begin{proof}
Suppose, by way of contradiction, that there is some $\tau\in(0,1)$ such that the solution curve $\gamma_\infty$ to the ODE $\gamma_\infty^*\omega=(\gamma_\infty)_G^*\omega_G$ starting at $\gamma_\infty(0)=\mathscr{e}\in\mathscr{G}$ is defined on $[0,\tau)$ but cannot be extended to $\tau$.

By continuous dependence on parameters---Lemma \ref{cdop} above---we have that $\gamma_k|_{[0,s]}$ converges to $\gamma_\infty|_{[0,s]}$ in the first-order compact-open topology for each $s\in[0,\tau)$. With this, we can leverage the compactness of $C$: every point of $\gamma_\infty|_{[0,\tau)}$ is a limit of points in $C$, so $\gamma_\infty([0,\tau))\subseteq C$, and hence, for each sequence $(t_\ell)$ in $[0,\tau)\subset[0,1]$ converging to $\tau$, we can (by passing to a subsequence if necessary) find a limit for the sequence $\gamma_\infty(t_\ell)$. Let us suggestively denote this limit by $``\gamma_\infty(\tau)"$.


Because \[\|\omega_G((\dot{\gamma}_k)_G(t))-\omega_G((\dot{\gamma}_\infty)_G(t))\|_\mathrm{g}=\|\omega(\dot{\gamma}_k(t))-\omega(\dot{\gamma}_\infty(t))\|_\mathrm{g}\] goes to $0$ as $k\rightarrow +\infty$, we can uniformly bound $\|\omega(\dot{\gamma}_k)\|_\mathrm{g}$ from above by $L=\sup_{k>0, t\in[0,1]}\|\omega(\dot{\gamma}_k(t))\|_\mathrm{g}$, so since the $\gamma_k$ are all piecewise-smooth, they are all $L$-Lipschitz continuous. Thus, for $\varepsilon>0$, if we choose $\ell$ large enough that $|\tau-t_\ell|<\tfrac{\varepsilon}{3L}$ and $\mathrm{dist}_{\mathrm{g}_\omega}(\gamma_\infty(t_\ell),``\gamma_\infty(\tau))")<\varepsilon/3$, and $k$ large enough that $\mathrm{dist}_{\mathrm{g}_\omega}(\gamma_k(t_\ell),\gamma_\infty(t_\ell))<\varepsilon/3$, then \begin{align*}\mathrm{dist}_{\mathrm{g}_\omega}(\gamma_k(\tau),``\gamma_\infty(\tau)") & \leq\mathrm{dist}_{\mathrm{g}_\omega}(\gamma_k(\tau),\gamma_k(t_\ell))+\mathrm{dist}_{\mathrm{g}_\omega}(\gamma_k(t_\ell),\gamma_\infty(t_\ell)) \\ & \quad +\mathrm{dist}_{\mathrm{g}_\omega}(\gamma_\infty(t_\ell),``\gamma_\infty(\tau)") \\ & < L\left(\frac{\varepsilon}{3L}\right)+\frac{\varepsilon}{3}+\frac{\varepsilon}{3}=\varepsilon,\end{align*} so $\lim_{k\rightarrow+\infty}\gamma_k(\tau)=``\gamma_\infty(\tau)"$. In particular, this shows that $``\gamma_\infty(\tau)"$ does not depend on the choice of sequence $(t_\ell)$ converging to $\tau$, so $\gamma_\infty$ extends continuously to $[0,\tau]$ with $\gamma_\infty(\tau)=``\gamma_\infty(\tau)"$, contradicting that $\gamma_\infty|_{[0,\tau)}$ was inextensible.\mbox{\qedhere}
\end{proof}

It is also, perhaps, worth mentioning that there is an even easier criterion to guarantee that path approximation via development works, though it is exactly the criterion we would generally want to avoid using in this context.

\begin{lemma}
Suppose $(\gamma_k)$ is a sequence in $\Path_\mathscr{e}(\mathscr{G})$, where $(\mathscr{G},\omega)$ is development-complete. If the developments $(\gamma_k)_G$ converge to some $(\gamma_\infty)_G\in\Path_e(G)$, then $(\gamma_k)$ converges to a path $\gamma_\infty\in\Path_\mathscr{e}(\mathscr{G})$ with development $(\gamma_\infty)_G$.
\end{lemma}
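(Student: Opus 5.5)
The plan is to reduce everything to the continuous dependence on parameters result, Lemma \ref{cdop}, using development-completeness to supply the limiting path that Lemma \ref{cdop} requires.

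First, since $(\mathscr{G},\omega)$ is development-complete, the limit $(\gamma_\infty)_G\in\Path_e(G)$ has a well-defined antidevelopment from $\mathscr{e}$. Call it $\gamma_\infty\in\Path_\mathscr{e}(\mathscr{G})$. It is a genuine piecewise smooth path on all of $[0,1]$ with $\omega(\dot{\gamma}_\infty)=\MC{G}((\dot{\gamma}_\infty)_G)$. By definition of development, each $\gamma_k$ likewise satisfies $\omega(\dot{\gamma}_k)=\MC{G}((\dot{\gamma}_k)_G)$.

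Next, set $f_k:=\omega(\dot{\gamma}_k)=\MC{G}((\dot{\gamma}_k)_G)$ and $f_\infty:=\omega(\dot{\gamma}_\infty)=\MC{G}((\dot{\gamma}_\infty)_G)$. Convergence of $(\gamma_k)_G$ to $(\gamma_\infty)_G$ in $\Path_e(G)$ means convergence in the first-order compact-open topology for the Klein geometry. The second summand of the metric $d_{\mathrm{g}_{\MC{G}}}$ is exactly $\sup_t\|f_k(t)-f_\infty(t)\|_\mathrm{g}$, so $f_k\to f_\infty$ uniformly. The only minor care needed is at the finitely many break points, where the derivatives are undefined. There we use one-sided values, or simply note that uniform convergence off a finite set is all that Lemma \ref{cdop} uses for paths with these velocities.

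Finally, apply Lemma \ref{cdop} with the net $(f_k)$, the limit $f_\infty$, the paths $\gamma_k$ and the path $\gamma_\infty$. It gives $\gamma_k\to\gamma_\infty$ in $\Path_\mathscr{e}(\mathscr{G})$, and $\gamma_\infty$ has development $(\gamma_\infty)_G$ by construction. Equivalently, the argument just uses that $(\cdot)_G$ is a homeomorphism for dev-complete geometries, as remarked after the definition.

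There is no real obstacle here. The one point to check is the existence of the limiting antidevelopment, which is precisely what development-completeness provides and what Lemma \ref{cdop} cannot supply by itself. This is also why the criterion is "exactly the one we would want to avoid": it assumes the conclusion one typically hopes to prove.
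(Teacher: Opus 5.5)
Your proof is correct and follows the paper's approach. The paper simply cites that the development map $(\cdot)_G:\Path_\mathscr{e}(\mathscr{G})\to\Path_e(G)$ is a homeomorphism for development-complete geometries, a fact it justifies via Lemma \ref{cdop}; you have unpacked that into the explicit application of Lemma \ref{cdop} to the antidevelopment of $(\gamma_\infty)_G$, which development-completeness guarantees exists.
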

\begin{proof}
If $(\mathscr{G},\omega)$ is development-complete, then the development map $(\cdot)_G:\Path_\mathscr{e}(\mathscr{G})\to\Path_e(G)$ is a homeomorphism. Therefore, if $(\gamma_k)_G$ converges to $(\gamma_\infty)_G$ in $\Path_e(G)$, then $\gamma_k=(\cdot)_G^{-1}((\gamma_k)_G)$ must also converge to $\gamma_\infty=(\cdot)_G^{-1}((\gamma_\infty)_G)$ in $\Path_\mathscr{e}(\mathscr{G})$.\mbox{\qedhere}
\end{proof}

\section{Clifton's examples}\label{clifton}
Before describing our main results in the next section, we think it would be instructive and helpful to the reader to review the two existing examples, due to Clifton in \cite{Clifton1966}, of Cartan geometries that are known to be complete in one way but incomplete in a stronger way. We have taken to calling these two examples the \textit{Clifton half-cylinder/can} and \textit{Clifton plane}.

\subsection{The Clifton half-cylinder/can}
The Clifton half-cylinder 
is an example of a Cartan geometry of type $(\mathbb{R}^2,\{0\})$ that is exp-complete but not dev-complete. It is defined on $\mathbb{R}/\mathbb{Z}\times(0,+\infty)$, viewed as a principal $\{0\}$-bundle over itself, with Cartan connection given by \[(\upsilon_\text{cyl})_{(x+\mathbb{Z},y)}:=\begin{bmatrix}\cos(\tfrac{1}{y}) & -\sin(\tfrac{1}{y}) \\ \sin(\tfrac{1}{y}) & \cos(\tfrac{1}{y})\end{bmatrix}\begin{bmatrix}\mathrm{d}x \\ \mathrm{d}y\end{bmatrix}\] at each point $(x+\mathbb{Z},y)\in\mathbb{R}/\mathbb{Z}\times(0,+\infty)$. This presentation of the geometry differs slightly from the original in \cite{Clifton1966}, which uses a punctured plane instead; we think that the picture on the cylinder is a bit clearer.

To start, note that $(\mathbb{R}/\mathbb{Z}\times(0,+\infty),\upsilon_\text{cyl})$ cannot be $\upsilon_\text{cyl}$-constant metric complete, since the Euclidean metric on $\mathbb{R}/\mathbb{Z}\times(0,+\infty)$ given by $\mathrm{d}x^{\otimes 2}+\mathrm{d}y^{\otimes 2}$ is an $\upsilon_\text{cyl}$-constant metric that is not complete (as a metric space). By Theorem \ref{devcompismetcomp}, it follows that the Clifton half-cylinder is not development-complete.

Despite this, the Clifton half-cylinder manages to be exp-complete. To see this, notice that all of the exponential curves for the geometry have $y$-components bounded away from $0$: since $1/y$ grows arbitrarily large as $y$ approaches $0$, any nontrivial geodesic in the geometry must turn as it moves in the direction of $y=0$ so that it approaches---but never passes---a particular curve of the form $t\mapsto(t+\mathbb{Z},y_\text{bound})$ for some $y_\text{bound}>0$. A sketch of such a geodesic is provided in Figure \ref{cylindergeodesics}. Since the $(x,y)$-coordinate expressions for the $\upsilon_\text{cyl}$-constant vector fields are all Lipschitz when restricted to $\mathbb{R}/\mathbb{Z}\times[y_\text{bound},+\infty)$, each geodesic in the geometry must be defined for all time.

\begin{figure}
\centering\includegraphics[width=0.45\textwidth]{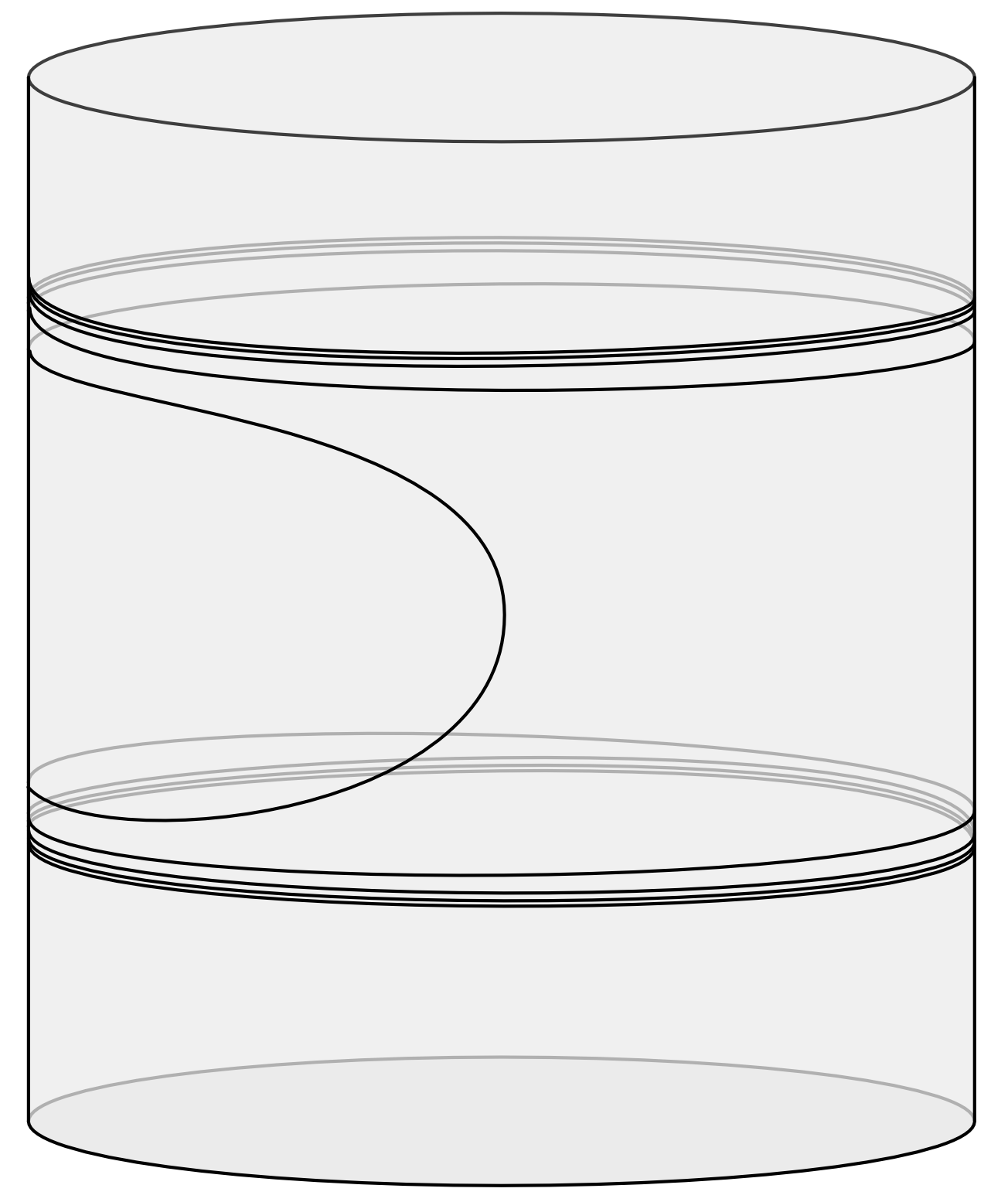}
\caption{A sketch of a geodesic in the Clifton half-cylinder}
\label{cylindergeodesics}
\end{figure}


Note that there is also a natural extension of the Clifton half-cylinder to a Cartan geometry of type $(\Iso(2),\Orth(2))$, with the trivial principal\linebreak $\Orth(2)$-bundle $(\mathbb{R}/\mathbb{Z}\times(0,+\infty))\times\Orth(2)$ and Cartan connection $\omega$ given by $\omega_{(x,y,A)}:=A^{-1}\upsilon_\text{cyl}+\MC{\Orth(2)}$. This extension is still geodesically complete, since its geodesics remain the same on the base manifold, but it also still cannot be development-complete. In particular, this tells us that the Hopf--Rinow theorem does not give an equivalence between the three notions of completeness for \textit{arbitrary} Cartan geometries of type $(\Iso(m),\Orth(m))$: it only applies to those with vanishing torsion, meaning that the projection of the curvature $\Omega:=\mathrm{d}\omega+\tfrac{1}{2}[\omega,\omega]$ to $\mathbb{R}^m$ must vanish for it to work.

\subsection{The Clifton plane}
The Clifton plane is the translation-invariant Cartan geometry $(\Aff(2),\MC{\Aff(2)}\!+\phi)$ of type $(\Aff(2),\GLin_2\mathbb{R})$ over the plane $\mathbb{R}^2\cong\Aff(2)/\GLin_2\mathbb{R}$, determined by the $\mathfrak{gl}_2\mathbb{R}$-valued deformation $\phi:T(\Aff(2))\to\mathfrak{gl}_2\mathbb{R}$ given by \[\phi_{([\begin{smallmatrix} x \\ y\end{smallmatrix}],A)}:=\Ad_{A^{-1}}\begin{bmatrix}0 & -\mathrm{d}y \\ \mathrm{d}y & \mathrm{d}x\end{bmatrix}\] for each $([\begin{smallmatrix} x \\ y\end{smallmatrix}],A)\in\Aff(2)\simeq\mathbb{R}^2\rtimes\GLin_2\mathbb{R}$.

We have
\[(\MC{\Aff(2)}+\phi)^{-1}(v,R)=\MC{\Aff(2)}^{-1}\Big(v,R-\phi(\MC{\Aff(2)}^{-1}(v))\Big),\]
so in particular,
\begin{align*}\exp\Big(t(\MC{\Aff(2)}+\phi)^{-1}([\begin{smallmatrix} 1 \\ 0\end{smallmatrix}])\Big)(0,\mathds{1}) & =\exp\Big(t\MC{\Aff(2)}^{-1}([\begin{smallmatrix} 1 \\ 0\end{smallmatrix}],[\begin{smallmatrix} 0 & 0 \\ 0 & -1\end{smallmatrix}])\Big)(0,\mathds{1}) \\ & =\left(\left[\begin{smallmatrix} t \\ 0\end{smallmatrix}\right],\left[\begin{smallmatrix} 1 & 0 \\ 0 & e^{-t}\end{smallmatrix}\right]\right).\end{align*}
Finding $\exp(t(\MC{\Aff(2)}+\phi)^{-1}([\begin{smallmatrix} 0 \\ 1\end{smallmatrix}]))(0,\mathds{1})$ is a bit trickier, but by a morass of not especially enlightening computation, we can determine that this latter exponential curve takes the form
\[t\mapsto\left(\begin{bmatrix}\tfrac{1}{2}\log(\cosh(\sqrt{2}t)) \\ \sqrt{2}\arctan(e^{\sqrt{2}t})-\tfrac{\pi}{2\sqrt{2}}\end{bmatrix},\begin{bmatrix}\tfrac{(e^{\sqrt{2}t}+1)^2}{2(e^{2\sqrt{2}t}+1)} & \tfrac{e^{2\sqrt{2}t}-1}{\sqrt{2}(e^{2\sqrt{2}t}+1)} \\ -\tfrac{e^{2\sqrt{2}t}-1}{\sqrt{2}(e^{2\sqrt{2}t}+1)} & \tfrac{2e^{\sqrt{2}t}}{e^{2\sqrt{2}t}+1}\end{bmatrix}\right).\]
We have sketched the image of this geodesic in Figure \ref{planegeodesics}. The key thing to notice is that, if we take each velocity of this geodesic on the base manifold and apply a translation to push it back into the tangent space at $0\in\mathbb{R}^2$, then we get a nonzero element of every one-dimensional subspace of $T_0\mathbb{R}^2$ except for $\langle\partial_x\rangle$, which we can get from the velocity for the geodesic corresponding to $[\begin{smallmatrix} 1 \\ 0\end{smallmatrix}]$ through $(0,\mathds{1})$. In other words, because $\MC{\Aff(2)}+\phi$ is invariant under translations and geodesics on the base manifold are uniquely determined by their velocity at a single point, every geodesic on the Clifton plane is a translation of one of these two geodesics, up to affine reparametrization. Consequently, it follows that the Clifton plane is geodesically complete.

\begin{figure}
\centering\includegraphics[width=0.8\textwidth]{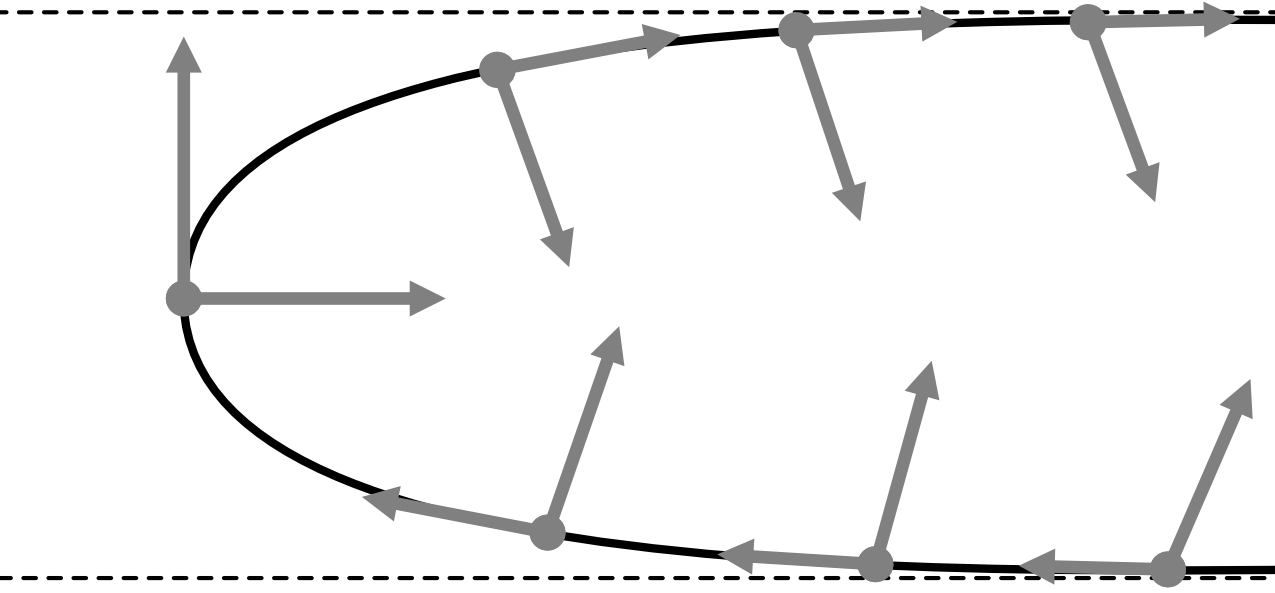}
\caption{A sketch of a geodesic over the Clifton plane whose image is not contained in a line}
\label{planegeodesics}
\end{figure}

While affine lines in $\mathbb{R}^2$ that are not of the form $t\mapsto a[\begin{smallmatrix}t \\ 0\end{smallmatrix}]+[\begin{smallmatrix}b \\ c\end{smallmatrix}]$ for some constants $a,b,c\in\mathbb{R}$ are no longer geodesics for the Clifton plane, they are still exponential curves: for each $v\in\mathbb{R}^2<\mathfrak{aff}(2)$, we have $(\MC{\Aff(2)}+\phi)(\MC{\Aff(2)}^{-1}(v))=v+\phi(\MC{\Aff(2)}^{-1}(v))$, which is constant along the flow of $\MC{\Aff(2)}^{-1}(v)$. In particular, we have \begin{align*}\left(t[\begin{smallmatrix}v_1 \\ v_2\end{smallmatrix}],\mathds{1}\right) & =\exp\big(t([\begin{smallmatrix}v_1 \\ v_2\end{smallmatrix}],0)\big)=\exp\big(t\MC{\Aff(2)}^{-1}([\begin{smallmatrix}v_1 \\ v_2\end{smallmatrix}],0)\big)(0,\mathds{1}) \\ & =\exp\Big(t(\MC{\Aff(2)}\!+\phi)^{-1}\big([\begin{smallmatrix}v_1 \\ v_2\end{smallmatrix}],[\begin{smallmatrix}0 & -v_2 \\ v_2 & v_1\end{smallmatrix}]\big)\Big)(0,\mathds{1}).\end{align*}

Let us specifically consider the vector $[\begin{smallmatrix}3 \\ \sqrt{2}\end{smallmatrix}]\in\mathbb{R}^2<\mathfrak{aff}(2)$. We can compute that
\[\phi_{([\begin{smallmatrix}x \\ y\end{smallmatrix}],\mathds{1})}\left(\MC{\Aff(2)}^{-1}\big([\begin{smallmatrix}3 \\ \sqrt{2}\end{smallmatrix}]\big)\right)=\left[\begin{smallmatrix}0 & -\sqrt{2} \\ \sqrt{2} & 3\end{smallmatrix}\right],\]
which has eigenvectors $[\begin{smallmatrix}\sqrt{2} \\ -1\end{smallmatrix}]$ and $[\begin{smallmatrix}1 \\ -\sqrt{2}\end{smallmatrix}]$ with corresponding eigenvalues $1$ and $2$, respectively. For $v\in\mathbb{R}^2$ and invertible $R\in\mathfrak{gl}_2\mathbb{R}$, we have
\[\exp\!\big(t(v,R)\big)=\big((\exp(tR)-\mathds{1})R^{-1}v,\exp(tR)\big),\]
so since $4\sqrt{2}[\begin{smallmatrix}\sqrt{2} \\ -1\end{smallmatrix}]-5[\begin{smallmatrix}1 \\ -\sqrt{2}\end{smallmatrix}]=[\begin{smallmatrix}3 \\ \sqrt{2}\end{smallmatrix}]$, the development $\gamma_{\Aff(2)}$ (with respect to ${\MC{\Aff(2)}\!+\phi}$) of the curve $\gamma:\mathbb{R}\to\Aff(2)$ given by $t\mapsto\big(t[\begin{smallmatrix}3 \\ \sqrt{2}\end{smallmatrix}],\mathds{1}\big)$ satisfies
\begin{align*}q_{{}_{\GLin_2\mathbb{R}}}(\gamma_{\Aff(2)}(t)) & =\gamma_{\Aff(2)}(t)\cdot 0 = \exp\left(t\left(\big[\begin{smallmatrix}3 \\ \sqrt{2}\end{smallmatrix}\big],\left[\begin{smallmatrix}0 & -\sqrt{2} \\ \sqrt{2} & 3\end{smallmatrix}\right]\right)\right)\cdot 0 \\ & =\left(\exp\!\left(t\left[\begin{smallmatrix}0 & -\sqrt{2} \\ \sqrt{2} & 3\end{smallmatrix}\right]\right)-\mathds{1}\right)\left[\begin{smallmatrix}0 & -\sqrt{2} \\ \sqrt{2} & 3\end{smallmatrix}\right]^{-1}\big[\begin{smallmatrix}3 \\ \sqrt{2}\end{smallmatrix}\big] \\ & =\left(\exp\!\left(t\left[\begin{smallmatrix}0 & -\sqrt{2} \\ \sqrt{2} & 3\end{smallmatrix}\right]\right)-\mathds{1}\right)(4\sqrt{2}[\begin{smallmatrix}\sqrt{2} \\ -1\end{smallmatrix}]-\tfrac{5}{2}[\begin{smallmatrix}1 \\ -\sqrt{2}\end{smallmatrix}]) \\ & =4\sqrt{2}e^t\big[\begin{smallmatrix}\sqrt{2} \\ -1\end{smallmatrix}\big]-\tfrac{5e^{2t}}{2}\big[\begin{smallmatrix}1 \\ -\sqrt{2}\end{smallmatrix}\big]-\tfrac{1}{2}\big[\begin{smallmatrix}11 \\ -3\sqrt{2}\end{smallmatrix}\big].\end{align*}
Notably, the image $q_{{}_{\GLin_2\mathbb{R}}}(\gamma_{\Aff(2)}(\mathbb{R}))$ of this projected curve is properly contained in the parabola $t\mapsto 4\sqrt{2}t\big[\begin{smallmatrix}\sqrt{2} \\ -1\end{smallmatrix}\big]-\tfrac{5t^2}{2}\big[\begin{smallmatrix}1 \\ -\sqrt{2}\end{smallmatrix}\big]-\tfrac{1}{2}\big[\begin{smallmatrix}11 \\ -3\sqrt{2}\end{smallmatrix}\big]$, as illustrated in Figure \ref{halfparabola}.

\begin{figure}[h]
\centering\includegraphics[width=0.9\textwidth]{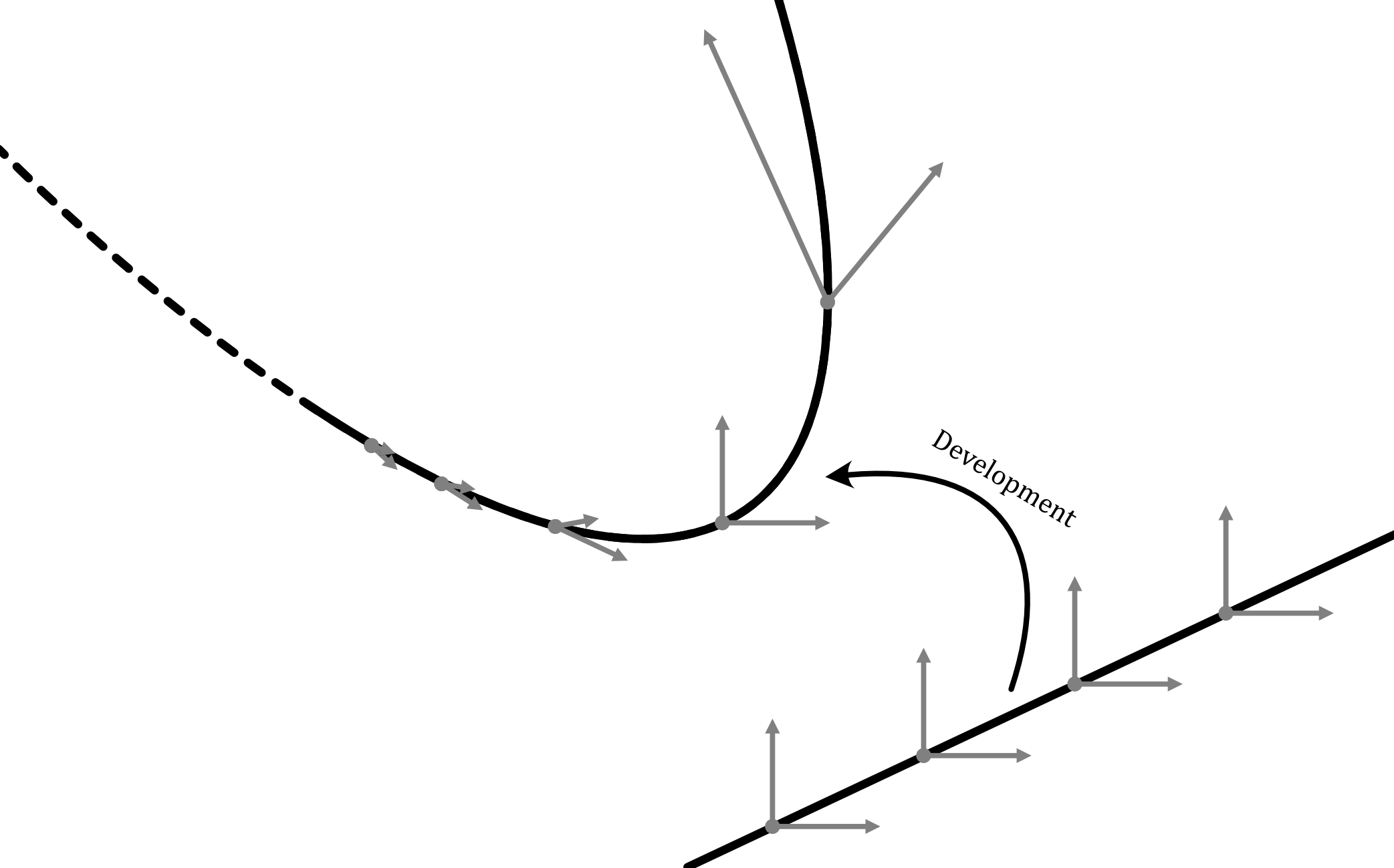}
\caption{A sketch of the development from the Clifton plane of a line in $\mathbb{R}^2$ with velocity $[\begin{smallmatrix}3 \\ \sqrt{2}\end{smallmatrix}]$, which is properly contained in the parabola indicated by the dotted curve}
\label{halfparabola}
\end{figure}

Look, however, at the one-parameter subgroup $\tilde{\gamma}_{\Aff(2)}:\mathbb{R}\to\Aff(2)$ given by
\begin{align*}\tilde{\gamma}_{\Aff(2)}(t) & :=\exp\!\left(t\!\left(\big[\begin{smallmatrix}3 \\ \sqrt{2}\end{smallmatrix}\big],\tfrac{-5\sqrt{2}}{8}\!\left[\begin{smallmatrix}\sqrt{2} & 1 \\ -2 & -\sqrt{2}\end{smallmatrix}\right]\right)\!\right) \\ & =\!\left(t\big[\begin{smallmatrix}3 \\ \sqrt{2}\end{smallmatrix}\big]\!-\!\tfrac{5t^2}{2}\!\big[\begin{smallmatrix}1 \\ -\sqrt{2}\end{smallmatrix}\big],\mathds{1}\!-\!\tfrac{5\sqrt{2}t}{8}\!\left[\begin{smallmatrix}\sqrt{2} & 1 \\ -2 & -\sqrt{2}\end{smallmatrix}\right]\right)\!;\end{align*}
this traces out the same parabola (up to reparametrization) as $\gamma_{\Aff(2)}$ in $\mathbb{R}^2=\Aff(2)/\GLin_2\mathbb{R}$, except this time it does not stop ``halfway'' along, so we can write
\begin{align*}q_{{}_{\GLin_2\mathbb{R}}}(\tilde{\gamma}_{\Aff(2)}(e^t-1)) & =(e^t-1)\big[\begin{smallmatrix}3 \\ \sqrt{2}\end{smallmatrix}\big]\!-\!\tfrac{5(e^t-1)^2}{2}\!\big[\begin{smallmatrix}1 \\ -\sqrt{2}\end{smallmatrix}\big] \\ & =4\sqrt{2}e^t\big[\begin{smallmatrix}\sqrt{2} \\ -1\end{smallmatrix}\big]-\tfrac{5e^{2t}}{2}\big[\begin{smallmatrix}1 \\ -\sqrt{2}\end{smallmatrix}\big]-\tfrac{1}{2}\big[\begin{smallmatrix}11 \\ -3\sqrt{2}\end{smallmatrix}\big] \\ & =q_{{}_{\GLin_2\mathbb{R}}}(\gamma_{\Aff(2)}(t)).\end{align*}
Thus, the exponential curve over the Clifton plane given by
\[\tilde{\gamma}:t\mapsto\exp\!\left(t(\MC{\Aff(2)}\!+\phi)^{-1}\left(\big[\begin{smallmatrix}3 \\ \sqrt{2}\end{smallmatrix}\big],\tfrac{-5\sqrt{2}}{8}\!\left[\begin{smallmatrix}\sqrt{2} & 1 \\ -2 & -\sqrt{2}\end{smallmatrix}\right]\right)\right)(0,\mathds{1})\]
can only be defined on $(-1,+\infty)$, since $q_{{}_{\GLin_2\mathbb{R}}}(\tilde{\gamma}(t))=\log(t+1)\big[\begin{smallmatrix}3 \\ \sqrt{2}\end{smallmatrix}\big]$, and in particular, the Clifton plane is not exp-complete.



\section{Constructing new examples by deformations}\label{deformations}
To show that one of the three notions of completeness fails to hold for some Cartan geometry, we just need to show that it fails to hold for a single path. Therefore, rather than trying to concoct a novel Cartan connection that happens to satisfy a given completeness criterion but not another, a better strategy for constructing such examples might be to start with a Cartan geometry that is complete in one sense, pick a curve along which we would like the geometry to be incomplete in a weaker sense, and then just deform the Cartan connection in a small neighborhood of that curve to make it incomplete along that curve.

\subsection{Deformations that break completeness}

\begin{definition}
For a Cartan geometry $(\mathscr{G},\omega)$ of type $(G,H)$ over $M$ and an element $Y\in\mathfrak{h}$, we say that a curve $\gamma:\mathbb{R}\to\mathscr{G}$ is \textbf{$Y$-malleable} whenever $q_{{}_H}\circ\gamma:\mathbb{R}\to M$ is a smooth immersion with embedded image and, if $\gamma(t)=\gamma(s)h$ for some $h\in H$ and $s,t\in\mathbb{R}$, then $h\in\mathrm{Z}_H(Y)$ and $\dot{\gamma}(t)=\Rt{h*}\dot{\gamma}(s)$.
\end{definition}

Effectively, $Y$-malleable curves $\gamma$ are just lifts to $\mathscr{G}$ of embeddings of $\mathbb{R}$ or $\mathbb{S}^1\cong\mathbb{R}/r\mathbb{Z}$ into $M$ with the caveat that, if $q_{{}_H}(\gamma)$ is a circle embedding---$q_{{}_H}(\gamma(s+r\mathbb{Z}))=q_{{}_H}(\gamma(s))$ for some nonzero $r\in\mathbb{R}$---then modulo right-translation from $H$, $\gamma$ must also be a circle embedding, and the right-translations between the points along $\gamma$ projecting to the same point of $M$ must centralize $Y$.

For $Y\in\mathfrak{g}$ and $\gamma:\mathbb{R}\to G$, denote by $\gamma^{+Y}:\mathbb{R}\to G$ the unique curve in $G$ such that $\gamma^{+Y}(0)=\gamma(0)$ and $\MC{G}(\dot{\gamma}^{+Y})=\MC{G}(\dot{\gamma})+Y$.

\begin{lemma}\label{devcrank}
Given a Cartan geometry $(\mathscr{G},\omega)$ of type $(G,H)$ over $M$, if $\gamma:\mathbb{R}\to\mathscr{G}$ is $Y$-malleable, then there exists a deformation $\phi$ of $\omega$ such that, if $\gamma_G$ is the development of $\gamma$ with respect to $\omega$, then $\gamma_G^{+Y}$ is the development of $\gamma$ with respect to $\omega+\phi$.
\end{lemma}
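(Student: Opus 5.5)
We will construct $\phi$ as an $\mathfrak{g}$-valued (in fact $\mathfrak{h}$-valued, indeed valued in $\Ad$-translates of $\mathbb{R}Y$) 1-form on $\mathscr{G}$ that is $H$-equivariant ($\Rt{h}^*\phi=\Ad_{h^{-1}}\phi$) and vanishes on vertical vectors. These are exactly the conditions making $\omega+\phi$ again a Cartan connection, at least where $\omega+\phi$ stays pointwise invertible. The target is
\[(\omega+\phi)(\dot\gamma)=\omega(\dot\gamma)+Y\]
along $\gamma$. Then the development $\gamma_G'$ of $\gamma$ with respect to $\omega+\phi$ satisfies $\gamma_G'(0)=e$ and $\MC{G}(\dot\gamma_G')=\MC{G}(\dot\gamma_G)+Y$. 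By uniqueness of solutions to this ODE on $G$, this gives $\gamma_G'=\gamma_G^{+Y}$.

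The construction of $\phi$ proceeds in three steps.
\begin{enumerate}
\item Let $c=q_{{}_H}\circ\gamma$, an immersion with embedded image $C\subseteq M$, diffeomorphic to $\mathbb{R}$ or $\mathbb{S}^1$. Choose a tubular neighborhood $U$ of $C$ with a smooth retraction $\pi:U\to C$, and a smooth 1-form $\alpha$ on $U$ with $\alpha(\dot c(t))=1$ for all $t$. For instance, extend the dual of $\dot c$ using a splitting $TM|_C=TC\oplus N$ and pull back. Multiply $\alpha$ by a bump function that equals $1$ near $C$ and is supported in $U$.
\item Define a map $\sigma:q_{{}_H}^{-1}(U)\to\mathfrak{h}$ by $\sigma(\gamma(t)h)=\Ad_{h^{-1}}Y$, extended to $q_{{}_H}^{-1}(U)$ via the trivialization of $\mathscr{G}$ along the fibers of $\pi$ (any local section over $U$ agreeing with $\gamma$ on $C$). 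Here $Y$-malleability is exactly what is needed. If $\gamma(t)=\gamma(s)h$, then $h\in\mathrm{Z}_H(Y)$, so $\Ad_{h^{-1}}Y=Y$ and $\sigma$ is well defined on $q_{{}_H}^{-1}(C)$. The tangency condition $\dot\gamma(t)=\Rt{h*}\dot\gamma(s)$ ensures that $\gamma$ closes up smoothly modulo $H$ in the circle case, so a section along $C$ through $\gamma$ exists.
\item Set $\phi:=\sigma\cdot q_{{}_H}^*\alpha$. Since $q_{{}_H}^*\alpha$ kills vertical vectors and is $H$-invariant, and $\sigma$ is $\Ad$-equivariant by construction, $\phi$ has the required equivariance. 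Moreover, along $\gamma$ we get $\phi(\dot\gamma)=\sigma(\gamma)\,\alpha(\dot c)=Y$.
\end{enumerate}

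The remaining concern is that $\omega+\phi$ be a linear isomorphism at each point. Since $\phi$ takes values in $\mathfrak{h}$ and vanishes on $\omega^{-1}(\mathfrak{h})$, we have
\[(\omega+\phi)\circ\omega^{-1}=\mathrm{id}+\phi\circ\omega^{-1},\]
which is unipotent with respect to the filtration $\mathfrak{h}\subseteq\mathfrak{g}$. Indeed, it maps $\mathfrak{h}$ identically and induces the identity on $\mathfrak{g}/\mathfrak{h}$. Hence it is automatically invertible, so no smallness condition is needed.

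The main obstacle is step 2: producing a globally well-defined equivariant extension $\sigma$ off $C$ in the circle case. I would handle it by first building the section along $C$ from $\gamma$, using $Y$-malleability to glue across the period. I would then extend it to a section over the tubular neighborhood by radial parallel transport with respect to any principal connection. Since $\alpha$ is cut off inside $U$, $\phi$ extends by zero to all of $\mathscr{G}$.
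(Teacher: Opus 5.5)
Your argument is correct in substance and uses the same building block as the paper. In a trivialization normalized along $\gamma$, the deformation is a base $1$-form equal to $1$ on $\dot c$, times $\Ad_{h^{-1}}Y$. The difference is local versus global.

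The paper works locally. It takes slice charts $U_k$ along $C$, with trivializations sending $\gamma(t)\mapsto(c(t),e)$ for $t\in I_k$, defines such a $\phi_k$ on each $q_{{}_H}^{-1}(U_k)$, and sets $\phi=\sum_k f_k\phi_k$. Each $\phi_k$ already returns $Y$ on every $\dot\gamma(t)$ lying over $U_k$, since malleability handles the revisits $\gamma(t)=\gamma(s)h$. So the local pieces never need to agree off $\gamma$, and no global extension problem arises.

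You instead write down one global product $\phi=\sigma\cdot q_{{}_H}^*\alpha$ on a tubular neighborhood. This is cleaner because it separates the two halves of malleability. The centralizer condition makes $\sigma$ well defined on $q_{{}_H}^{-1}(C)$. The tangency condition makes $\dot c$ a well-defined vector field on $C$, so that $\alpha$ with $\alpha(\dot c)\equiv 1$ exists, e.g. $\alpha=\pi^*\beta$ with $\beta$ the $1$-form on $C$ dual to $\dot c$. Your unipotence argument for invertibility of $\omega+\phi$ also supplies a check the paper leaves implicit.

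The price is a global extension step, and there one claim is false as written. In the circle case, $\gamma(t+r)=\gamma(t)h_0$ with $h_0\in\mathrm{Z}_H(Y)$ constant (by the tangency condition), but possibly $h_0\neq e$. Moreover, $\mathscr{G}|_C$ need not even be trivial: take the frame bundle over the core circle of a M\"obius band, $h_0=\mathrm{diag}(1,-1)$, and any $Y$ commuting with it. So in general there is no section of $\mathscr{G}$ along $C$ through $\gamma$, and malleability cannot glue one.

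What malleability does glue is the section $[\gamma(t),Y]$ of $\mathscr{G}\times_H\mathfrak{h}$, i.e. your $\sigma$ on $q_{{}_H}^{-1}(C)$, which you already showed is well defined. Extend that directly. Put $\sigma(\mathscr{g}):=\sigma(P(\mathscr{g}))$, where $P$ parallel-transports $\mathscr{g}$ along the fiber of $\pi$ back into $q_{{}_H}^{-1}(C)$ using some principal connection. Since $P$ commutes with right translations, $\sigma$ is smooth and equivariant, and no section of $\mathscr{G}$ is needed. Alternatively, extend locally and glue with a partition of unity, which is just the paper's device.

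One caveat applies to both arguments. Your cutoff ``equal to $1$ near $C$ and supported in $U$'' requires $C$ to be closed in $M$. The paper's cover $\{U_k\}\cup\{M\setminus\overline{C}\}$ tacitly needs the same. This holds for the circle embeddings where the lemma is actually applied.
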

\begin{proof}
We use the fact that $q_{{}_H}(\gamma(\mathbb{R}))$ is an embedded submanifold to cover it with charts $U_k\subseteq M$ of the form $\psi_k:U_k\to I_k\times\mathbb{R}^{\dim(G/H)-1}$, where $I_k\subseteq\mathbb{R}$ is an open interval for which $q_{{}_H}(\gamma(I_k))=q_{{}_H}(\gamma(\mathbb{R}))\cap U_k$, such that $\mathscr{G}$ is trivial over each $U_k$, so that $q_{{}_H}^{-1}(U_k)\cong U_k\times H$, and $\psi_k(q_{{}_H}(\gamma(t)))=(t,0)\in I_k\times\mathbb{R}^{\dim(G/H)-1}$ for all $t\in I_k$. We may further\linebreak assume, by using right-translations in $H$, that the local trivializations $\sigma_k:q_{{}_H}^{-1}(U_k)\to U_k\times H$ satisfy $\sigma(\gamma(t))=(q_{{}_H}(\gamma(t)),e)$ for all $t\in I_k$ as well. With a choice of inner product $\mathrm{g}$ on $\mathfrak{g}$, we may define horizontal $H$-equivariant $\mathfrak{h}$-valued one-forms $\phi_k$ on each $q_{{}_H}^{-1}(U_k)$ by \[(\phi_k)_{\sigma_k^{-1}(\psi_k^{-1}(t,x),h)}(\xi):=\frac{\mathrm{g}(\omega(\dot{\gamma}(t)),\Ad_h\omega((\sigma_k^{-1})_*(q_{{}_H*}\xi,0)))}{\|\omega(\dot{\gamma}(t))\|_\mathrm{g}^2}\Ad_{h^{-1}}Y.\]
The set of all the charts $U_k$ together with $M\setminus\overline{q_{{}_H}(\gamma(\mathbb{R}))}$ gives an open cover of $M$, so we can find a subordinate smooth partition of unity $\{f_k:U_k\text{ is one of the charts}\}\cup\{f_{M\setminus\overline{q_H(\gamma(\mathbb{R}))}}\}$ for it; this allows us to define a horizontal $H$-equivariant $\mathfrak{h}$-valued one-form $\phi$ on all of $\mathscr{G}$ by $\phi:=\sum_kf_k\phi_k$.

Suppose that $\gamma(t)=\gamma(s)h$ for some $h\in H$ and $s\in I_k$. Then, since this implies $\dot{\gamma}(t)=\Rt{h*}\dot{\gamma}(s)$ and $h\in\mathrm{Z}_H(Y)$, we have \[\phi_k(\dot{\gamma}(t))=\phi_k(\Rt{h*}\dot{\gamma}(s))=\Ad_{h^{-1}}\phi_k(\dot{\gamma}(s))=\Ad_{h^{-1}}Y=Y.\] Thus, for every $t\in\mathbb{R}$, \[\phi(\dot{\gamma}(t))=\sum_kf_k(q_{{}_H}(\gamma(t)))\phi_k(\dot{\gamma}(t))=\sum_kf_k(q_{{}_H}(\gamma(t)))Y=Y,\] so $\gamma_G^{+Y}$ is the development of $\gamma$ with respect to $\omega+\phi$.\mbox{\qedhere}
\end{proof}

\begin{definition}
We say a curve $\gamma:\mathbb{R}\to G$ is \textbf{trammeled by $Y\in\mathfrak{h}$} in $(G,H)$ if and only if $q_{{}_H}(\gamma(t))$ does not converge as $t\rightarrow +\infty$ but $q_{{}_H}(\gamma^{+Y}(t))$ does converge as $t\rightarrow +\infty$.
\end{definition}

\begin{theorem}\label{devtrammel}
In the setting of Lemma \ref{devcrank}, if $\gamma_G$ is trammeled by $Y$, then $(\mathscr{G},\omega+\phi)$ is not development-complete.
\end{theorem}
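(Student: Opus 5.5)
The plan is to produce an explicit path in $\Path_e(G)$ whose antidevelopment with respect to $\omega+\phi$ cannot exist on all of $[0,1]$. By Lemma \ref{devcrank}, the curve $\gamma:\mathbb{R}\to\mathscr{G}$ is defined for all time and has development $\gamma_G^{+Y}$ with respect to $\omega+\phi$. Since $\gamma_G$ is trammeled by $Y$, the projection $q_{{}_H}(\gamma_G^{+Y}(t))$ converges in $G/H$ to some point $x_\infty$ as $t\rightarrow+\infty$. The curve $\gamma_G^{+Y}(t)$ itself need not converge in $G$, because its $H$-component may run off. To fix this, I choose a smooth local section $\sigma$ of $G\to G/H$ near $x_\infty$. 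For all large $t$ I then write $\gamma_G^{+Y}(t)=\sigma(q_{{}_H}(\gamma_G^{+Y}(t)))\,h(t)^{-1}$ with $h:[T,\infty)\to H$ smooth, and extend $h$ smoothly to all of $[0,\infty)$ with $h(0)=e$.

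By $H$-equivariance of Cartan connections, the development of $t\mapsto\gamma(t)h(t)$ with respect to $\omega+\phi$ is $t\mapsto\gamma_G^{+Y}(t)h(t)$. For $t\ge T$ this equals $\sigma(q_{{}_H}(\gamma_G^{+Y}(t)))$, which converges in $G$ to $\sigma(x_\infty)$. Next I reparametrize by a diffeomorphism $[0,1)\to[0,\infty)$, such as $s\mapsto s/(1-s)$. This gives a path $c$ on $[0,1)$ which extends continuously to $s=1$.

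For $c$ to lie in $\Path_e(G)$ it must be piecewise smooth up to $s=1$. I intend to arrange this by choosing the reparametrization slowly enough near $s=1$ that $\MC{G}(\dot c)$ stays bounded, or, failing that, by replacing the tail of $c$ with a short smooth path to $\sigma(x_\infty)$ that stays within the same coordinate chart. If $(\mathscr{G},\omega+\phi)$ were development-complete, $c$ would have an antidevelopment on $[0,1]$. By uniqueness of solutions, on $[0,1)$ this antidevelopment must be the reparametrized $\gamma(t)h(t)$. Hence $\gamma(t)h(t)$, and so $q_{{}_H}(\gamma(t))$, would converge in $M$ as $t\rightarrow+\infty$.

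The remaining step, which I expect to be the main obstacle, is deriving a contradiction from the convergence of $q_{{}_H}(\gamma(t))$. When $q_{{}_H}\circ\gamma$ is a circle embedding, which is the case used in Theorem \ref{affinecirclethm}, it is periodic and nonconstant, so it cannot converge and the contradiction is immediate. In the line case I would try to use the non-convergence of $q_{{}_H}(\gamma_G(t))$ coming from trammeledness. The idea is that if $q_{{}_H}(\gamma(t))$ converged to some $p$, then $\gamma(t)h'(t)$ would eventually lie in a compact neighborhood $C$ of the fiber representative over $p$, for suitable $h'$. An argument in the spirit of Lemma \ref{cpctconv}, applied with $\omega$ on a compact time-reparametrization, would then force $q_{{}_H}(\gamma_G(t))$ to converge, contradicting trammeledness. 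If this comparison fails in general, I would instead restrict the statement to the situations where $q_{{}_H}\circ\gamma$ is known not to converge, since the embedding hypothesis in $Y$-malleability is presumably there to guarantee exactly this.
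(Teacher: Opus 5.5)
Your construction is the same as the paper's: you right-translate $\gamma_G^{+Y}$ by an $H$-valued curve so that it converges in $G$, reparametrize to $[0,1)$, and use uniqueness to identify any antidevelopment with $(\gamma h)\circ r$. The gap is the final step, which you leave open when $q_{{}_H}\circ\gamma$ is not a circle.

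\textbf{The final step.} Your compactness sketch starts only from convergence of $q_{{}_H}(\gamma(t))$, and that cannot suffice. A curve can converge in $M$ while having infinite $\omega$-length, for instance by spiralling into a point where the curvature is nonzero, and its development can then escape. $Y$-malleability does not prevent convergence either: $t\mapsto(\arctan t,0)$ is an embedded line in $\mathbb{R}^2$ that converges. The embedding hypothesis is there so that Lemma \ref{devcrank} can build $\phi$ in adapted charts. The paper handles this step with a one-line assertion that non-convergence of $q_{{}_H}(\gamma_G(t))$ forces non-convergence of $q_{{}_H}(\gamma(t))$, so your unease is reasonable.

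The missing idea is to bring the \emph{undeformed} connection back in after assuming development-completeness. The antidevelopment $\tilde{c}$ of $c$ with respect to $\omega+\phi$ would be a piecewise smooth path on the closed interval $[0,1]$. Developments into $G$ always exist, so the development of $\tilde{c}$ with respect to $\omega$ is defined and continuous on all of $[0,1]$. By uniqueness and $H$-equivariance, on $[0,1)$ this $\omega$-development is $s\mapsto\gamma_G(r(s))h(r(s))$. Continuity at $s=1$ then forces $q_{{}_H}(\gamma_G(t))$ to converge as $t\rightarrow+\infty$, contradicting trammeledness. This handles lines and circles uniformly and needs no argument in the style of Lemma \ref{cpctconv}. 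Your compactness idea would only work once you fed in exactly this information, namely that $\tilde{c}$ has bounded velocity on a closed interval.

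\textbf{Regularity at $s=1$.} Your two fallback options here also need correcting. Replacing the tail of $c$ by a short path to $\sigma(x_\infty)$ destroys the argument. On the altered stretch, uniqueness no longer forces the antidevelopment to follow $\gamma h$, so nothing obstructs its existence. A reparametrization with bounded $\MC{G}(\dot c)$ exists only if $q_{{}_H}(\gamma_G^{+Y}(t))$ reaches its limit along a curve of finite length. Mere convergence, which is all that ``trammeled'' asks for, does not provide this. The paper's proof passes over this point as well, simply asserting that the reparametrized curve lies in $\Path_e(G)$. In the application, Theorem \ref{affinecirclethm}, the projected curve converges exponentially with all $t$-derivatives $O(e^{-t})$. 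There your choice $r(s)=s/(1-s)$ makes $c$ flat, hence smooth, at $s=1$.
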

\begin{proof}
Since $\gamma_G$ is the development of $\gamma$ with respect to $\omega$ and $q_{{}_H}(\gamma_G(t))$ does not converge in $G/H$ as $t\rightarrow+\infty$, $q_{{}_H}(\gamma(t))$ cannot converge as $t\rightarrow+\infty$ either. On the other hand, $\gamma_G^{+Y}$ is the development of $\gamma$ with respect to $\omega+\phi$, and by assumption, the limit $\lim_{t\rightarrow +\infty}q_{{}_H}(\gamma_G^{+Y}(t))$ does exist. Therefore, there exist a diffeomorphism $r:[0,1)\to[0,+\infty)$ and $\zeta:[0,1)\to H$ with $\zeta(0)=e$ such that $\gamma_G^{+Y}(r(t))\zeta(t)$ converges as $t\rightarrow 1$, and this gives us a path $\Rt{\zeta}(\gamma_G^{+Y}\circ r)\in\Path_e(G)$ that is not the development with respect to $\omega+\phi$ of a path starting at $\gamma(0)=\mathscr{e}$ in $\mathscr{G}$, hence $(\mathscr{G},\omega+\phi)$ fails to be development-complete.\mbox{\qedhere}
\end{proof}

To better visualize the idea behind these results, consider what these deformations look like for 1-dimensional affine geometry $(\Aff(1),\mathbb{R}^\times)$, where $\Aff(1)\simeq\mathbb{R}\rtimes\mathbb{R}^\times$. Taking $\gamma:\mathbb{R}\to\Aff(1)$ to be the curve given by $t\mapsto(t,1)=\exp(t(1,0))$, the resulting curve $q_{{}_{\mathbb{R}^\times}}(\gamma):t\mapsto t$ on the base manifold $\Aff(1)/\mathbb{R}^\times\cong\mathbb{R}$ is naturally an embedding of $\mathbb{R}$, hence $\gamma$ is $Y$-malleable for every $Y=(0,y)$ in the isotropy subalgebra. Since $[(0,y),(1,0)]=y(1,0)$, the $(0,y)$-shifted curve $\gamma^{+(0,y)}$ speeds up for $y>0$ and slows down for $y<0$; indeed, for each $t\in\mathbb{R}$ and $y\neq 0$,
\[\gamma^{+(0,y)}(t):=\exp(t((1,0)+(0,y)))=\exp(t(1,y))=(\tfrac{e^{ty}-1}{y},e^{ty}),\]
so for $y<0$, $q_{{}_{\mathbb{R}^\times}}(\gamma^{+(0,y)})$ slows down so much that it no longer goes off to infinity as $t\rightarrow+\infty$, since it has a limit $\lim_{t\rightarrow+\infty}\tfrac{e^{ty}-1}{y}=\tfrac{-1}{y}$. Therefore, for $y<0$, $\gamma$ is trammeled by $(0,y)$, so we can apply Lemma \ref{devcrank} to get a deformation $\phi$ such that, by Theorem \ref{devtrammel}, $(\Aff(1),\MC{\Aff(1)}+\phi)$ is not\linebreak development-complete. 
We can verify this directly: with respect to the deformed connection ${\MC{\Aff(1)}\!+\phi}$, the development of $\gamma$ is $\gamma^{+(0,y)}$, so for $r:[0,1)\to[0,+\infty)$ given by $t\mapsto\tfrac{1}{y}\log(1-t)$, the development of $\gamma\circ r$ is $\gamma^{+(0,y)}\circ r$. Since
\begin{align*}\gamma^{+(0,y)}(r(t)) & =\gamma^{+(0,y)}(\tfrac{1}{y}\log(1-t))=\left(\frac{e^{y\log(1-t)/y}-1}{y},e^{y\log(1-t)/y}\right) \\ & =(\tfrac{-t}{y},1-t),\end{align*}
we can define $\zeta:[0,1)\to\mathbb{R}^\times$ by $\zeta(t):=\tfrac{1}{1-t}$, so that the development of $\Rt{\zeta}(\gamma\circ r):[0,1)\to\Aff(1)$ is $\Rt{\zeta}(\gamma^{+(0,y)}\circ r)$, and
\[\Rt{\zeta(t)}(\gamma^{+(0,y)}(r(t)))=(\tfrac{-t}{y},1-t)(0,\tfrac{1}{1-t})=(\tfrac{-t}{y},1)\]
continuously extends to $t=1$, even though $\Rt{\zeta}(\gamma\circ r)$ does not, so this continuous extension to $[0,1]$ does not have an antidevelopment in $(\Aff(1),\MC{\Aff(1)}+\phi)$. Moreover, since $\exp(t(\tfrac{-1}{y},0))=(\tfrac{-t}{y},1)$, this shows that $(\Aff(1),\MC{\Aff(1)}+\phi)$ is not even complete.

\begin{figure}
\centering\includegraphics[width=0.8\textwidth]{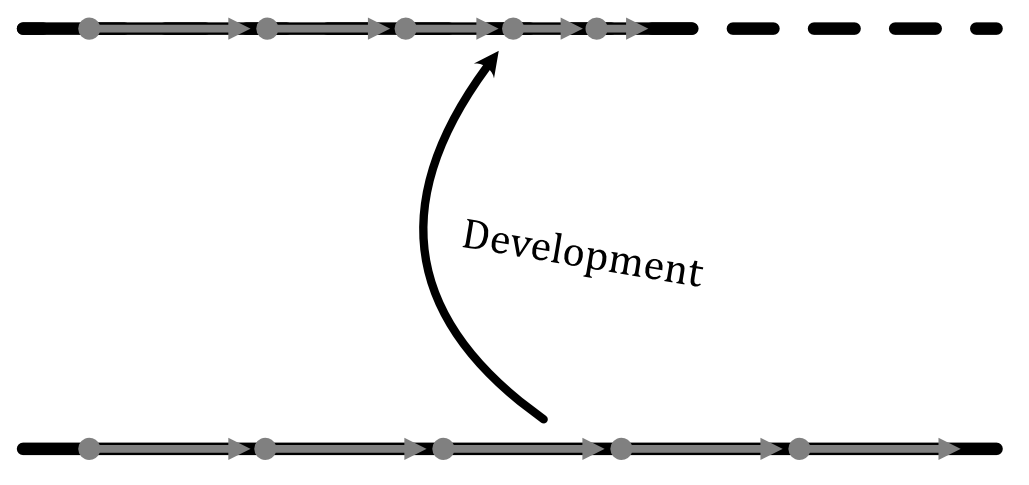}
\caption{A sketch of the exponential curve $t\mapsto(t,1)$ in $\Aff(1)$ and its development with respect to a deformed connection $\MC{\Aff(1)}+\phi$}
\label{aff1example}
\end{figure}

The sketch of the exponential curve $t\mapsto(t,1)$ and its development with respect to a deformed connection in Figure \ref{aff1example} should, hopefully, be vaguely reminiscent of the sketch of the development of the line in the Clifton plane from Figure \ref{halfparabola}. This is intentional, as they both essentially amount to deformations of 1-dimensional affine geometry. To see this for the Clifton plane, let $X=\big[\begin{smallmatrix}3 \\ \sqrt{2}\end{smallmatrix}\big]$, $Y=\big[\begin{smallmatrix}0 & -\sqrt{2} \\ \sqrt{2} & 3\end{smallmatrix}\big]$, and $Z=\tfrac{-5\sqrt{2}}{8}\!\left[\begin{smallmatrix}\sqrt{2} & 1 \\ -2 & -\sqrt{2}\end{smallmatrix}\right]$. The development of the line $\gamma:t\mapsto\exp(tX)$ in the Clifton plane is precisely $\gamma^{+Y}:t\mapsto\exp(t(X+Y))$, and the exponential curve lying over the parabola whose image in the base manifold contains $q_{{}_{\GLin_2\mathbb{R}}}(\gamma^{+Y}(\mathbb{R}))$ is $t\mapsto\exp(t(X+Z))$. Notice that \[[Y-Z,X+Z]=[Y-Z,X]+[Y,Z]=X+Z,\] so we have an isomorphism of Lie algebras $\langle X+Z,Y-Z\rangle\approx\mathfrak{aff}(1)$ given by $a(X+Z)+s(Y-Z)\mapsto(a,s)$. Aside from the fact that the example in the Clifton plane is parametrized to move in the opposite direction to the example in the affine line above, this isomorphism shows us that these two cases behave in effectively the same way, which leads us to a general method of building deformations of Cartan geometries to get examples that are not exp-complete.

\begin{definition}
In a model $(G,H)$, a pair $(Y,Z)\in\mathfrak{h}\times\mathfrak{h}$ is called a \textbf{half-fine pair} for $X\in\mathfrak{g}$ if and only if $t\mapsto\exp(tX)$ is trammeled by $Y$ and $[Z-Y,X+Z]=\lambda(X+Z)$ for some $\lambda>0$, so that $\langle X+Z,Z-Y\rangle$ is isomorphic to $\mathfrak{aff}(1)\approx\mathbb{R}\niplus\mathbb{R}$.
\end{definition}

Crucially, since $[Z-Y,X+Z]=\lambda(X+Z)$,
\[\exp(a(X+Z)+s(Z-Y))=\exp\left(a\tfrac{e^{\lambda s}-1}{\lambda s}(X+Z)\right)\exp(s(Z-Y))\]
for $s\neq 0$, so the image of the curve
\[t\mapsto q_{{}_H}(\exp(t(X+Y)))=q_{{}_H}(\exp(t((X+Z)-(Z-Y))))\]
is properly contained in the image of $t\mapsto q_{{}_H}(\exp(t(X+Z)))$ because $q_{{}_H}(\exp(\mathbb{R}(X+Y)))=q_{{}_H}(\exp((-\infty,\frac{1}{\lambda})(X+Z)))$. Hence, we can think of the exponential curve for $X+Y$ as only going ``halfway'' along the exponential curve for $X+Z$ when we project to the base manifold.

\begin{theorem}\label{halfnotcomplete}
Suppose $(\mathscr{G},\omega)$ is a Cartan geometry of type $(G,H)$ over $M$ and $\gamma:\mathbb{R}\to\mathscr{G}$ is a $Y$-malleable exponential curve of the form $t\mapsto\exp(t\omega^{-1}(X))\mathscr{e}$. If $(Y,Z)$ is a half-fine pair for $X$, then there exists a deformation $\phi$ of $\omega$ such that $(\mathscr{G},\omega+\phi)$ is not exp-complete.
\end{theorem}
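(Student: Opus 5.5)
Apply Lemma \ref{devcrank} to the $Y$-malleable curve $\gamma$. This gives a deformation $\phi$ of $\omega$ such that the development of $\gamma$ with respect to $\omega+\phi$ is $\gamma_G^{+Y}$. Since $\gamma$ is the exponential curve $t\mapsto\exp(t\omega^{-1}(X))\mathscr{e}$, its $\omega$-development is $t\mapsto\exp(tX)$. Hence its $(\omega+\phi)$-development is $t\mapsto\exp(t(X+Y))$, because $\MC{G}$ of the velocity is the constant $X+Y$.

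The plan is to produce an exponential curve for $\omega+\phi$ with constant $X+Z$ that has to leave $\mathscr{G}$ in finite time. Write $\lambda>0$ for the constant in $[Z-Y,X+Z]=\lambda(X+Z)$. Set $W:=Z-Y\in\mathfrak{h}$ and $V:=X+Z$, so $X+Y=V-W$. The identity quoted after the definition of half-fine pair, with $a=t$ and $s=-t$, gives
\[\exp(t(X+Y))=\exp\!\big(c(t)V\big)\exp(-tW),\qquad c(t):=\frac{1-e^{-\lambda t}}{\lambda}.\]
The map $c$ is an increasing diffeomorphism of $\mathbb{R}$ onto $(-\infty,\tfrac1\lambda)$.

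Next consider the curve $\beta(u):=\gamma(c^{-1}(u))\exp(c^{-1}(u)W)$ for $u\in(-\infty,\tfrac1\lambda)$. Right translation by $H$ acts on developments through $\Rt{h}^*(\omega+\phi)=\Ad_{h^{-1}}(\omega+\phi)$ together with the $\mathfrak{h}$-part of the velocity. By the same computation used for $(h\gamma h^{-1})_\mathscr{G}h$ in the preliminaries, the development of $\beta$ is therefore
\[u\mapsto\exp(t(X+Y))\exp(tW)=\exp(uV),\qquad t=c^{-1}(u).\]
So $\beta$ is exactly the $(\omega+\phi)$-exponential curve $u\mapsto\exp(u(\omega+\phi)^{-1}(V))\mathscr{e}$, restricted to $(-\infty,\tfrac1\lambda)$. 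This is the same reparametrization-plus-gauge trick as in the $\Aff(1)$ example, where $\zeta$ plays the role of $\exp(c^{-1}(u)W)$.

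It remains to show this exponential curve cannot be extended past $u=\tfrac1\lambda$. As $u\to\tfrac1\lambda$ we have $t\to+\infty$, and $q_{{}_H}(\beta(u))=q_{{}_H}(\gamma(t))$. Since $\exp(tX)$ is trammeled by $Y$, $q_{{}_H}(\exp(tX))$ does not converge as $t\to+\infty$. Because $\gamma_G(t)=\exp(tX)$ is the $\omega$-development of $\gamma$, the argument in the first paragraph of the proof of Theorem \ref{devtrammel} shows that $q_{{}_H}(\gamma(t))$ does not converge in $M$ either. Consequently $\beta(u)$ has no limit in $\mathscr{G}$ as $u\to\tfrac1\lambda$. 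Now suppose the flow of $(\omega+\phi)^{-1}(V)$ through $\mathscr{e}$ were defined at $u=\tfrac1\lambda$. By uniqueness it agrees with $\beta$ on $[0,\tfrac1\lambda)$, so it would be continuous there and $\beta(u)$ would converge, a contradiction. Hence $(\mathscr{G},\omega+\phi)$ is not exp-complete.

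The main obstacle is the step deducing that $q_{{}_H}(\gamma(t))$ does not converge, which the proof of Theorem \ref{devtrammel} states tersely. Suppose it did converge to some $p\in M$. Then on a neighborhood of $p$ the geometry $\omega$ is locally modeled by a chart, and trivializing $\mathscr{G}$ over that neighborhood we can write $\gamma(t)=\sigma(t)\eta(t)$ with $\sigma$ a section converging in $\mathscr{G}$ and $\eta(t)\in H$. The $\omega$-development would then be a convergent-in-$G$ path times $H$-elements. So $q_{{}_H}(\exp(tX))$ would converge in $G/H$ if developments of paths with convergent projections had convergent projections. That property fails in general, since the development of a path of finite length converges only when the length is finite, and here $t\in[0,\infty)$. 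I would handle this by first trying to control the $H$-part via compactness of the closure of the projection near $p$, and if that fails, fall back on the same justification the paper uses in Theorem \ref{devtrammel}.
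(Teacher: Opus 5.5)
Your strategy is the paper's. Lemma \ref{devcrank} turns $\gamma$ into the $(\omega+\phi)$-exponential curve for $X+Y$. The half-fine identity, a reparametrization and an $H$-gauge then identify $\beta(u)=\gamma(c^{-1}(u))\exp(c^{-1}(u)W)$, $u<\tfrac{1}{\lambda}$, with the $(\omega+\phi)$-exponential curve for $V=X+Z$ through $\mathscr{e}$. All of that is correct, and more explicit than the paper, which simply asserts that this curve ``can only be defined for $t<\tfrac{1}{\lambda}$.''

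The gap is the final step, as you suspect. You get non-extendability from the claim that $q_{{}_H}(\gamma(t))$ has no limit in $M$, justified by the first paragraph of the proof of Theorem \ref{devtrammel}. But that inference (convergent base projection implies convergent projection of the development) is not valid for infinite-length paths through curved regions, as you yourself note. Your compactness fallback also fails when $H$ is noncompact, e.g.\ $\GLin_m\mathbb{R}$: then $q_{{}_H}^{-1}$ of a compact neighbourhood of the would-be limit point is not compact, and $\gamma(t)$ may escape along the fibres. So as written the non-extendability is not established.

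You do not need any information about $q_{{}_H}(\gamma(t))$. Look at the $\omega$-development of $\beta$ instead of its $(\omega+\phi)$-development.
\begin{itemize}
\item The same gauge computation, now with $\omega$ and $\omega(\dot\gamma)=X$, shows that the $\omega$-development of $\beta$ is $u\mapsto\exp(tX)\exp(tW)$ with $t=c^{-1}(u)$. Its image in $G/H$ is $q_{{}_H}(\exp(tX))$.
\item Suppose the flow of $(\omega+\phi)^{-1}(V)$ through $\mathscr{e}$ existed at $u=\tfrac{1}{\lambda}$. Then $\beta$ would be the restriction of a smooth path on $[0,\tfrac{1}{\lambda}]$, so $\omega(\dot\beta)$ would be continuous on that closed interval.
\item Hence the $\omega$-development of $\beta$ would extend continuously to $u=\tfrac{1}{\lambda}$ in $G$. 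Projecting, $q_{{}_H}(\exp(tX))$ would converge as $t\to+\infty$.
\item This contradicts the hypothesis that $\exp(tX)$ is trammeled by $Y$.
\end{itemize}
This closes the argument without the shortcut used in the proof of Theorem \ref{devtrammel}.
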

\begin{proof}
Let $\phi$ be the deformation constructed in Lemma \ref{devcrank}. By the result of that lemma, the development of $\gamma$ with respect to $\omega+\phi$ is $\gamma_G^{+Y}$, which is precisely the curve $t\mapsto\exp(t(X+Y))$. In particular, $\gamma(t)=\exp(t(\omega+\phi)^{-1}(X+Y))\mathscr{e}$, and if $[Z-Y,X+Z]=\lambda(X+Z)$, then the curve $t\mapsto\exp(t(\omega+\phi)^{-1}(X+Z))\mathscr{e}$ can only be defined for $t<\frac{1}{\lambda}$, so $(\omega+\phi)^{-1}(X+Z)$ is not complete.\mbox{\qedhere}
\end{proof}

\subsection{A strategy for obtaining Clifton-type examples}
From the above results, we know how to deform along curves to get geometries that are not exp-complete or not dev-complete. Outside of the regions around these curves where we deform the Cartan connection, though, the resulting geometries are exactly the same as before, so modulo what happens in those deformed regions, the geometries will be exp-complete or geodesically complete if they already were before we deformed them. This suggests a strategy for building Clifton-type examples: if we make the deformed region small enough and are selective with our choice of $Y$, then we might be able to break the more stringent notions of completeness but leave the weaker ones intact.

As an illustrative example, for affine geometries, we get the following result.

\begin{theorem}\label{affinecirclethm}
For every smooth embedding $\gamma:\mathbb{S}^1\cong\mathbb{R}/\mathbb{Z}\hookrightarrow\mathbb{R}^m$, there exists a deformation $\phi$ of the Klein geometry $(\Aff(m),\MC{\Aff(m)})$ of type $(\Aff(m),\GLin_m\mathbb{R})$ over $\mathbb{R}^m$, with $\phi$ vanishing outside of some neighborhood of $\gamma(\mathbb{S}^1)$, such that $(\Aff(m),\MC{\Aff(m)}\!+\phi)$ is geodesically complete but not development-complete.
\end{theorem}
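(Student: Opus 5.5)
The plan is to apply Lemma \ref{devcrank} and Theorem \ref{devtrammel} with a \emph{scalar} isotropy element $Y=-c\mathds{1}\in\mathfrak{gl}_m\mathbb{R}$, $c>0$. This choice does two things. It makes the malleability condition trivial, since $Y$ is central in $\GLin_m\mathbb{R}$. It also forces the deformation to act on geodesics only by reparametrization, which is what preserves geodesic completeness.

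\emph{Setup and failure of dev-completeness.} I take the lift $\tilde\gamma:\mathbb{R}\to\Aff(m)$, $\tilde\gamma(t):=(\gamma(t+\mathbb{Z}),\mathds{1})$, of the embedded circle, assuming (as I will) that the embedding $\gamma$ is an immersion.
\begin{itemize}
\item If $\tilde\gamma(t)=\tilde\gamma(s)h$, then $h=\mathds{1}$ and $t-s\in\mathbb{Z}$, so $\dot{\tilde\gamma}(t)=\dot{\tilde\gamma}(s)$ by periodicity. Hence $\tilde\gamma$ is $Y$-malleable.
\item Since the geometry is the Klein geometry, the development of $\tilde\gamma$ is $\tilde\gamma_{\Aff(m)}(t)=(\gamma(t)-\gamma(0),\mathds{1})$. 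Its projection traces the circle forever and does not converge.
\item I compute $\tilde\gamma_{\Aff(m)}^{+Y}$ directly. A curve $(p(t),A(t))$ has Maurer--Cartan form $(A^{-1}\dot p,A^{-1}\dot A)$. Taking $A(t)=e^{-ct}\mathds{1}$ and $\dot p=e^{-ct}\dot\gamma$ gives exactly $(\dot\gamma,-c\mathds{1})$.
\item So $p(t)=\int_0^te^{-cs}\dot\gamma(s)\,\mathrm{d}s$, which converges as $t\to+\infty$ because $\dot\gamma$ is bounded.
\end{itemize}
Thus the development is trammeled by $Y$. Lemma \ref{devcrank} then gives $\phi$, and Theorem \ref{devtrammel} shows $(\Aff(m),\MC{\Aff(m)}+\phi)$ is not development-complete. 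In running the construction of Lemma \ref{devcrank}, I will choose the charts $U_k$ and the partition of unity so that $\sum_kf_k$ is supported in a prescribed relatively compact tubular neighborhood $N$ of $\gamma(\mathbb{S}^1)$. This makes $\phi$ vanish outside $N$.

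\emph{Geodesic completeness.} By the formula in the proof of Lemma \ref{devcrank}, $\phi$ takes values in $\mathbb{R}\cdot\Ad_{h^{-1}}Y=\mathbb{R}\mathds{1}$. Hence $\phi=-c\,\alpha\,\mathds{1}$ for a horizontal, $\GLin_m\mathbb{R}$-invariant real one-form, that is, $\alpha=q^*\beta$ for a one-form $\beta$ on $\mathbb{R}^m$ supported in $N$. Translating into the language of affine connections, the deformed connection is $\nabla'_uv=\nabla_uv\pm c\,\beta(u)v$, with $\nabla$ flat; I will fix the sign by checking against the exponential-curve definition. The geodesic equation becomes $\ddot x=\mp c\,\beta(\dot x)\dot x$. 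The acceleration is parallel to the velocity, so every geodesic of the deformed geometry runs along a straight line $x=x_0+u\,e$ with $|e|=1$. Writing $\sigma=\mathrm{d}u/\mathrm{d}t$, the equation reduces to
\[\frac{\mathrm{d}\sigma}{\mathrm{d}u}=\mp c\,\beta_{x_0+ue}(e)\,\sigma,\qquad\text{so}\qquad \sigma(u)=\sigma_0\exp\Big(\mp c\int_0^u\beta_{x_0+re}(e)\,\mathrm{d}r\Big).\]

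\emph{Main obstacle.} The key step is bounding this exponent uniformly in $u$. A line meets the compact set $\overline N$ in a segment of length at most $\mathrm{diam}(N)$, and $|\beta|$ is bounded, so the integral is bounded for all $u\in\mathbb{R}$. Hence $\sigma$ is bounded above and below by positive constants. Then $t(u)=\int_0^u\sigma^{-1}$ is a diffeomorphism $\mathbb{R}\to\mathbb{R}$, and every geodesic exists for all time. The exponential curves on $\Aff(m)$ over these geodesics are then recovered from the base curve and the deformed connection by a linear ODE for the frame, which also exists for all time, so the geometry is geodesically complete.

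The care needed is in justifying the passage from $\omega$-constant flows of translation vectors to the affine geodesic equation with this specific $\nabla'$. One must check that horizontality and equivariance of $\phi$ translate into the tensor $\beta(u)v$; if this bookkeeping works, the rest is routine.
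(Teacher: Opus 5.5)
Your proposal is correct and is essentially the paper's proof. Both use a central isotropy element ($Y=-\mathds{1}$ there), the deformation of Lemma \ref{devcrank} built from charts small enough that $\phi$ has relatively compact support, and trammeling from the $e^{-ct}$ damping of the translational velocity; geodesic completeness then follows because, with $Y$ scalar, each geodesic stays in $(x+\langle v\rangle,\mathbb{R}_+\mathds{1})$ and is only reparametrized while it crosses the support of $\phi$. Your explicit bound on the speed along each line, obtained by integrating your one-form over a segment of length at most $\mathrm{diam}(N)$, is a useful quantitative version of the paper's one-line claim. The sign bookkeeping you flag can be done directly on $\Aff(m)$: for a curve $t\mapsto(x(t),A(t))$ the geodesic condition reads $A^{-1}\dot x=v$ and $A^{-1}\dot A=c\,\beta(\dot x)\mathds{1}$, which gives $\ddot x=c\,\beta(\dot x)\dot x$.
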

\begin{proof}
To start, let us extend $\gamma$ to a curve $\gamma:\mathbb{R}\to\mathbb{R}^m<\Aff(m)$ such that $\gamma(t+k)=\gamma(t)$ for all $k\in\mathbb{Z}$. By definition, this is an immersion with embedded image, and if $\gamma(t)=\gamma(s)h$, then $t\in s+\mathbb{Z}$ and $h=\mathds{1}$, so $\gamma$ is $Y$-malleable for any $Y\in\mathfrak{gl}_m\mathbb{R}$ that we would like. We will take\linebreak $Y=-\mathds{1}\in\mathfrak{gl}_m\mathbb{R}$ and construct the deformation $\phi$ as in Lemma \ref{devcrank}. By choosing sufficiently small charts, we may assume the support of $\phi$, viewed as a subset of the base manifold $\mathbb{R}^m$, to have compact closure.

The development $\gamma_{\Aff(m)}$ of $\gamma$ with respect to $\MC{\Aff(m)}$ is just $\gamma(0)^{-1}\gamma$, and its development with respect to $\MC{\Aff(m)}+\phi$ is, by Lemma \ref{devcrank}, $(\gamma(0)^{-1}\gamma)^{+Y}\!=\!\gamma(0)^{-1}\gamma^{+Y}$. Denoting the quotient homomorphism from $\Aff(m)$ to $\GLin_m\mathbb{R}\simeq\Aff(m)/\mathbb{R}^m$ by $\pi:\Aff(m)\to\GLin_m\mathbb{R}$, we have $\pi(\gamma^{+Y}(t))=\exp(tY)=e^{-t}\mathds{1}$ and, using $q_{{}_{\GLin_m\mathbb{R}}}$ as a map from $\Aff(m)$ to $\mathbb{R}^m<\Aff(m)$,
\[\MC{\Aff(m)}(q_{{}_{\GLin_m\mathbb{R}}*}(\dot{\gamma}^{+Y}(t)))=\Ad_{\pi(\dot{\gamma}^{+Y})}\MC{\Aff(m)}(\dot{\gamma}(t))=e^{-t}\MC{\Aff(m)}(\dot{\gamma}(t)).\]
Since $\MC{\Aff(m)}(\dot{\gamma})$ is bounded, $q_{{}_{\GLin_m\mathbb{R}}}(\gamma^{+Y}(t))$ must converge as $t\rightarrow+\infty$, so $\gamma_{\Aff(m)}=\gamma(0)^{-1}\gamma$ is trammeled by $Y$ and, by Theorem \ref{devtrammel}, $\MC{\Aff(m)}\!+\phi$ is not development-complete.

On the other hand, consider a geodesic $\beta$ in $\Aff(m)$ for $\MC{\Aff(m)}+\phi$. Up to right-translation, we may assume that $\beta(0)=(x,\mathds{1})$ for some $x\in\mathbb{R}^m$, and if $(\MC{\Aff(m)}+\phi)(\dot{\beta})=v\in\mathbb{R}^m\triangleleft\mathfrak{aff}(m)$, then since $v$ is an eigenvector for $Y=-\mathds{1}$, $\beta$ is contained in
\[(x+\langle v\rangle,\mathbb{R}_+\mathds{1})=\{(x+tv,a\mathds{1}):t\in\mathbb{R},a>0\}.\]
Because the support of $\phi$ has compact closure, it follows that $q_{{}_{\GLin_m\mathbb{R}}}(\beta)$ cannot grow arbitrarily fast without leaving the support of $\phi$ along the line $x+\langle v\rangle$, and since geodesics can be extended indefinitely outside of the support of $\phi$, it follows that each geodesic $\beta$ is defined on all of $\mathbb{R}$, hence $(\Aff(m),\MC{\Aff(m)}+\phi)$ is geodesically complete.\mbox{\qedhere}
\end{proof}

We suspect that the affine geometries constructed by the above result also happen to be exp-complete, rather than just geodesically complete, but the analysis involved in showing this was surprisingly complicated. As Cartan geometries of type $(\mathbb{R}^m\rtimes\mathbb{R}_+\mathds{1},\mathbb{R}_+\mathds{1})$, they are definitely exp-complete, but after extending $\mathbb{R}^m\rtimes\mathbb{R}_+\mathds{1}\hookrightarrow\Aff(m)$, it seems to be substantially more difficult to guarantee that an exponential curve for $\MC{\Aff(m)}+\phi$ cannot stay inside the support of $\phi$ to still ``blow up'' in finite time.

\section{Generalizing the Clifton plane construction}\label{quadvectstuff}
As an extension of the above ideas, we would like to augment the construction for the Clifton plane to provide a larger family of examples of geodesically complete Cartan geometries that are not exp-complete. In particular, we will prove the following result.

\begin{theorem}\label{cliftongen} On each Lie group $S$ of dimension at least 2, there exists a homogeneous affine geometry that is geodesically complete but not exp-complete.\end{theorem}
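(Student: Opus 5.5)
We will build the example as a left-invariant affine connection on $S$, in the same spirit as the Clifton plane. Let $n=\dim S\geq 2$ and identify $\mathfrak{s}\cong\mathbb{R}^n$ through a basis of left-invariant vector fields. A left-invariant affine connection on $S$ is then the same thing as a bilinear map $B:\mathfrak{s}\times\mathfrak{s}\to\mathfrak{s}$, through $\nabla_XY=B(X,Y)$ for left-invariant $X,Y$. It gives a homogeneous Cartan geometry of type $(\Aff(n),\GLin_n\mathbb{R})$ on the frame bundle $S\times\GLin_n\mathbb{R}$, with Cartan connection $\omega_{(s,A)}=A^{-1}\MC{S}+\Ad_{A^{-1}}\big(B(\MC{S},\cdot)\big)+\MC{\GLin_n\mathbb{R}}$, of the same shape as the Clifton plane deformation.

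\textbf{Step 1: reduce both completeness questions to ODEs on $\mathfrak{s}$.} Take a curve $s(t)$ and write $u=\MC{S}(\dot s)$. The geodesic equation becomes the quadratic ODE $\dot u=-B(u,u)$ on $\mathfrak{s}$. By homogeneity, geodesic completeness means every solution of this ODE exists for all time.

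The exponential curves come from elements $(v,R)\in\mathfrak{aff}(n)$. Evaluated at frames $(s,\mathds{1})$, they amount to the coupled system $u=A v$, $\dot A=A R-B(u,\cdot)A$. Only the symmetric part $\tfrac12(B(u,v)+B(v,u))$ enters the geodesic ODE, while the exponential flow sees all of $B$. This suggests choosing $B=Q+K$, with $Q$ symmetric and $K$ skew. We want $Q$ to make the geodesic ODE complete, and $K$ to produce a half-fine blowup as in Theorem \ref{halfnotcomplete}.

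\textbf{Step 2: choose $Q$ so the geodesic ODE is complete.} Pick a nonzero linear functional $\alpha$ and a vector $e_1$ with $\alpha(e_1)=1$, and set $Q(u,u)=\alpha(u)\,P u$ for a suitable linear map $P$. Then $\dot u=-\alpha(u)Pu$ is solvable in closed form: along a solution $u(t)=f(t)\,\exp(-\tau(t)P)u_0$, where $\tau$ satisfies a scalar ODE. Completeness reduces to checking that this scalar ODE has global solutions. The Clifton plane already exhibits exactly this behaviour with $n=2$: its geodesics are translates of two explicit global curves.

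For general $n$ we can reduce to the Clifton plane. Take $\mathfrak{s}=\langle e_1,e_2\rangle\oplus W$ with $B$ acting as Clifton's form on $\langle e_1,e_2\rangle$ and trivially on $W$, so the $W$-component of $u$ is constant. The subtlety is that the Clifton plane corresponds to the abelian group $\mathbb{R}^2$. On a nonabelian $S$, the left-trivialized geodesic equation for a left-invariant connection is nonetheless still $\dot u=-B(u,u)$, written in terms of $\nabla$ rather than the Lie bracket. So the geodesic ODE is literally the same as for the abelian group with the same $B$, and geodesic completeness transfers verbatim from the Clifton plane computation extended by $W$.

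\textbf{Step 3: exhibit an incomplete exponential curve.} Here we reuse the Clifton half-parabola inside the Clifton block. Take $X=[\begin{smallmatrix}3\\\sqrt2\end{smallmatrix}]$ with the matrices $Y,Z$ from Section \ref{clifton}, extended by zero on $W$. We then need to verify that the exponential curve for $(X,Z)$ through $(e,\mathds{1})$ escapes in finite time.

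This is the main obstacle. On a nonabelian $S$, the base projection of the exponential curve satisfies $\MC{S}(\dot s)=A v$, and no longer the translation formula. The explicit solution therefore changes, and the computation $q(\tilde\gamma(t))=\log(t+1)X$ may fail.

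My plan is to redo the half-fine argument intrinsically. Along the curve $t\mapsto(\exp(tX),\mathds{1})$, the $\omega$-value should be the constant $X+Y$, which requires $\MC{S}$ of a one-parameter subgroup to be constant; this holds. The relation $[Z-Y,X+Z]=\lambda(X+Z)$ then still holds in $\mathfrak{aff}(n)$. The reparametrization argument of Theorem \ref{halfnotcomplete}, which uses only the group $\langle X+Z,Z-Y\rangle\approx\mathfrak{aff}(1)$ and that identity, should show that the flow of $\omega^{-1}(X+Z)$ traces $\exp(\mathbb{R}X)$ only up to parameter $1/\lambda$. The reason is that $\omega^{-1}(X+Z)$ and $\omega^{-1}(Z-Y)$ span an $\mathfrak{aff}(1)$-action along the orbit surface $\{(\exp(tX),a\,e^{sM})\}$.

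Checking that this surface is invariant requires $\omega$ to be constant on it, and that depends only on left-invariance. If the check fails, the fallback is to choose the basis of $\mathfrak{s}$ so that $\langle e_1,e_2\rangle$ contains $X$ and to verify the needed invariance directly.
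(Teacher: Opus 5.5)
Your geometry is exactly the paper's $\omega_\rho$: take $\rho_\mathfrak{m}$ to be your identification $\mathfrak{s}\cong\mathbb{R}^n$ and $\rho_\mathfrak{h}(u)=B(u,\cdot)$, the Clifton block padded by zeros. Your $X,Y,Z$ are also the paper's, so the overall strategy is that of Theorem \ref{cliftongen+}.

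Your Step 2 is correct, but it takes a different route from the paper. The paper uses $\mathrm{g}$-steadiness (Lemma \ref{slowandsteady}). You instead observe that the left-trivialized geodesic equation $\dot u=-B(u,u)$ never sees the bracket of $\mathfrak{s}$. So global existence of $u$ is inherited from the Clifton plane, and then $A$ (a linear ODE) and $s$ (a time-dependent left-invariant ODE) are global as well. Your route is in fact the safer one. With the dot product, $v^{T}\rho_\mathfrak{h}(v)v=v_1v_2^2\neq 0$, so the paper's steadiness check needs a reweighted inner product such as $2u_1^2+u_2^2+u_3^2+\cdots+u_n^2$. That is precisely the quantity conserved by $\dot u_1=u_2^2$, $\dot u_2=-2u_1u_2$, and quoting it would make your Step 2 self-contained.

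The gap is Step 3. It is half the theorem, and you leave it as a plan (``should show'', plus a fallback). The obstacle you anticipate for nonabelian $S$ does not exist. What makes your surface invariant is the $\GLin_n\mathbb{R}$-equivariance of $\omega$ (the $\Ad_{A^{-1}}$ in your formula), not left-invariance, and not $\omega$ being constant on the surface.

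Here is the missing argument. Put $V=Z-Y$, so that $[V,X+Z]=\lambda(X+Z)$; with the Clifton numbers $\lambda=-1$, and either sign yields finite-time escape. Consider $c(t)=(\exp(\sigma(t)X),\exp(\sigma(t)V))$. This curve lies in the right $\exp(\mathbb{R}V)$-orbit of the global exponential curve $\tau\mapsto(\exp(\tau X),\mathds{1})$ of $X+Y$. On any Lie group, $\MC{S}\big(\tfrac{\mathrm{d}}{\mathrm{d}t}\exp(\sigma(t)X)\big)=\dot\sigma X$. Also $\Ad_{\exp(-\sigma V)}(X+Y)=\Ad_{\exp(-\sigma V)}\big((X+Z)-V\big)=e^{-\lambda\sigma}(X+Z)-V$. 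Hence
\[\omega(\dot c)=\dot\sigma\big(e^{-\lambda\sigma}(X+Z)-V\big)+\dot\sigma V=\dot\sigma e^{-\lambda\sigma}(X+Z).\]
Choosing $\sigma(t)=-\tfrac{1}{\lambda}\log(1-\lambda t)$ makes $c$ the integral curve of $\omega^{-1}(X+Z)$ through $(e,\mathds{1})$, and $|\sigma(t)|\to\infty$ as $t\to 1/\lambda$. For $X\neq 0$, $\exp(\sigma X)$ has no limit as $|\sigma|\to\infty$, since a limit $g$ would satisfy $g\exp(rX)=g$ for all $r$. So the curve cannot be continued, and $\omega^{-1}(X+Z)$ is incomplete. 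For $\lambda=-1$ this recovers $\sigma=\log(1+t)$ from the Clifton plane.

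Your fallback of arranging $X\in\langle e_1,e_2\rangle$ adds nothing, since $X=3e_1+\sqrt{2}e_2$ already lies there.
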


To obtain the desired homogeneous examples, we will not be able to simply deform along a single nice curve. Instead, we will utilize some concepts from the study of quadratic vector fields to guarantee geodesic completeness of certain affine examples, then use the existence of appropriate half-fine pairs to show that at a particular geodesically complete affine example cannot be exp-complete.

\subsection{Quadratic vector fields}
Suppose $(G,H)$ is a reductive model geometry with $\mathfrak{g}\approx\mathfrak{m}\oplus\mathfrak{h}$ as an $H$-representation. Then, a Cartan geometry of type $(G,H)$ over a Lie group $S$ for which $S$ acts on itself by automorphisms will take the form $(S\times H,\omega_\rho)$, where $\rho:\mathfrak{s}\to\mathfrak{g}$ is a linear map such that the induced map $\bar{\rho}:\mathfrak{s}\to\mathfrak{g}/\mathfrak{h}\approx\mathfrak{m}$ is a linear isomorphism and \[(\omega_\rho)_{(s,h)}:=\Ad_{h^{-1}}\rho(\MC{S})+\MC{H}.\] By fixing a given identification $\rho_\mathfrak{m}:\mathfrak{s}\to\mathfrak{m}$, we can then specify a particular homogeneous connection over $S$ by choosing a linear map $\rho_\mathfrak{h}:\mathfrak{s}\to\mathfrak{h}$ to be given by $\rho_\mathfrak{h}=\rho-\rho_\mathfrak{m}$.

\begin{definition}The \textbf{quadratic vector field} associated to $\omega_\rho$ is the vector field $Q$ on the Lie algebra $\mathfrak{s}$ given by $Q_\xi:=-\rho_\mathfrak{m}^{-1}([\rho_\mathfrak{h}(\xi),\rho_\mathfrak{m}(\xi)])$.\end{definition}

This vector field comes from looking at the geodesic flow: for $v\in\mathfrak{m}$, we have \[(\omega_\rho)_{(s,h)}^{-1}(v)=\MC{S}^{-1}\left(\rho_\mathfrak{m}^{-1}(\Ad_h v)\right)-\MC{H}^{-1}\left(\Ad_{h^{-1}}\rho_\mathfrak{h}(\rho_\mathfrak{m}^{-1}(\Ad_h v))\right),\] so the derivative of $\MC{S}(\omega_\rho^{-1}(v))=\rho_\mathfrak{m}^{-1}(\Ad_h v)=:\eta_v$ along this vector field is \begin{align*}(\eta_v)' & =(\rho_\mathfrak{m}^{-1}(\Ad_h v))'=\rho_\mathfrak{m}^{-1}((\Ad_h)'v) \\ & =\rho_\mathfrak{m}^{-1}\left(\Ad_h[-\Ad_{h^{-1}}\rho_\mathfrak{h}(\rho_\mathfrak{m}^{-1}(\Ad_h v)),v]\right) \\ & =-\rho_\mathfrak{m}^{-1}([\rho_\mathfrak{h}(\eta_v),\rho_\mathfrak{m}(\eta_v)])=Q_{\eta_v}.\end{align*}

Let us fix an inner product $\mathrm{g}$ on the model Lie algebra $\mathfrak{g}$. Then, we have a complete Riemannian metric $(\rho_\mathfrak{m}^*\mathrm{g})_{\omega_S}+\mathrm{g}_{\omega_H}$ on $S\times H$ given by a product of left-invariant Riemannian metrics on $S$ and $H$, respectively. In particular, if we want to find a choice of $\rho:\mathfrak{s}\to\mathfrak{g}$ for which the Cartan geometry $(S\times H,\omega_\rho)$ is geodesically complete, then we just need to guarantee that, for each $v\in\mathfrak{m}$, $\MC{S}(\omega_\rho^{-1}(v))=\rho_\mathfrak{m}^{-1}(\Ad_h v)$ and $\MC{H}(\omega_\rho^{-1}(v))=\Ad_{h^{-1}}\rho_\mathfrak{h}(\rho_\mathfrak{m}^{-1}(\Ad_h v))$ do not escape to infinity in finite time as we flow along the geodesics $t\mapsto\exp(t\omega_\rho^{-1}(v))(s,h)$, which amounts to showing that $\Ad_h v$ does not ``blow up'' in finite time.

One way to guarantee this is to ask that $\|\!\Ad_h v\|_\mathrm{g}^2=\|\rho_\mathfrak{m}^{-1}(\Ad_h v)\|_{\rho_\mathfrak{m}^*\mathrm{g}}^2$ be constant along these geodesics, which we call being $\mathrm{g}$-steady.

\begin{definition}For an inner product $\mathrm{g}$ on $\mathfrak{g}$, we say that $\rho:\mathfrak{s}\to\mathfrak{g}$ is \textbf{$\mathrm{g}$-steady} when $\mathcal{L}_Q\|\xi\|_{\rho_\mathfrak{m}^*\mathrm{g}}^2=0$ everywhere on $\mathfrak{s}$. Equivalently, $\rho$ is $\mathrm{g}$-steady if and only if $\mathrm{g}(v,[\rho_\mathfrak{h}(\rho_\mathfrak{m}^{-1}(v)),v])=0$ for all $v\in\mathfrak{m}$.\end{definition}

Again, since $\mathrm{g}$-steadiness guarantees that $\MC{S}(\omega_\rho^{-1}(v))$ and $\MC{H}(\omega_\rho^{-1}(v))$ cannot escape to infinity in finite time, it implies that the vector field $\omega_\rho^{-1}(v)$ is complete for each $v\in\mathfrak{m}$, yielding the following lemma.

\begin{lemma}\label{slowandsteady}If $\rho:\mathfrak{s}\to\mathfrak{g}$ is $\mathrm{g}$-steady, then $(S\times H,\omega_\rho)$ is geodesically complete.\end{lemma}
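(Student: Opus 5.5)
The approach is to bound both components of a geodesic in the complete product metric on $S\times H$, using steadiness to control the $S$-velocity and then integrating to control the $H$-component. Fix $v\in\mathfrak{m}$ and $(s_0,h_0)\in S\times H$. Let $\beta(t)=(s(t),h(t))$ be the maximal integral curve of $\omega_\rho^{-1}(v)$ through $(s_0,h_0)$, defined on some interval $(a,b)$. The goal is to show $b=+\infty$; the case $a=-\infty$ follows by the same argument applied to $-v$.

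\textbf{Step 1: the velocity of $\beta$ is bounded.} Set $\eta(t):=\rho_\mathfrak{m}^{-1}(\Ad_{h(t)}v)$. The computation preceding the definition of the quadratic vector field shows that $\dot\eta=Q_\eta$. By $\mathrm{g}$-steadiness,
\[\tfrac{\mathrm{d}}{\mathrm{d}t}\|\eta\|^2_{\rho_\mathfrak{m}^*\mathrm{g}}=(\mathcal{L}_Q\|\xi\|^2_{\rho_\mathfrak{m}^*\mathrm{g}})(\eta)=0,\]
so $\|\eta(t)\|$ is constant, equal to $c:=\|\Ad_{h_0}v\|_\mathrm{g}$. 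The velocity of $\beta$ has two components:
\[\MC{S}(\dot s)=\eta,\qquad \MC{H}(\dot h)=-\Ad_{h^{-1}}\rho_\mathfrak{h}(\eta).\]
The first is bounded in norm by $c$. The second equals $-\Ad_{h^{-1}}$ applied to a vector of norm at most $\|\rho_\mathfrak{h}\|c$, so it is not yet controlled, because $\Ad_{h^{-1}}$ need not be bounded.

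\textbf{Step 2: handle the $\Ad_{h^{-1}}$ factor.} Instead of working with $\MC{H}(\dot h)$, I use the right Maurer--Cartan form of $H$:
\[\dot h\,h^{-1}=\Ad_h\MC{H}(\dot h)=-\rho_\mathfrak{h}(\eta),\]
which has norm at most $\|\rho_\mathfrak{h}\|c$. The right-invariant metric on $H$ is also complete, so $h(t)$ has bounded speed with respect to it. The $S$-component likewise has bounded speed in the complete left-invariant metric on $S$. It follows that on $[0,b)$ with $b<\infty$, both $s(t)$ and $h(t)$ have finite length, hence converge as $t\to b$ by completeness. The curve $\beta$ would then extend past $b$, contradicting maximality.

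The main obstacle I expect is exactly this mismatch between left and right trivializations on $H$: the paper phrases completeness through the left-invariant product metric, while the natural bound involves $\Ad_{h^{-1}}$. Passing to right-invariant velocity on $H$ resolves it. The alternative is to note that the $H$-component satisfies the linear ODE $\dot h=-\rho_\mathfrak{h}(\eta(t))h$ with bounded coefficients, and so cannot blow up in finite time. Either way, every $\omega_\rho^{-1}(v)$ with $v\in\mathfrak{m}$ is complete, which is geodesic completeness.
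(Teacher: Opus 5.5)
Your proof is correct and follows the paper's argument. Steadiness makes $\|\Ad_h v\|_\mathrm{g}$ constant along each geodesic, and bounded speed in a complete invariant metric on $S\times H$ then rules out finite-time escape. Your switch to the right-invariant metric on $H$, where the velocity is just $-\rho_\mathfrak{h}(\eta)$, improves on the paper: the paper uses the left-invariant product metric and says the problem ``amounts to'' $\Ad_h v$ not blowing up, which silently skips the $\Ad_{h^{-1}}$ factor in $\MC{H}(\omega_\rho^{-1}(v))$ (justifying it there would need an extra Gronwall-type step).
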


\subsection{Proof of Theorem \ref{cliftongen}}
We will prove the following more general result, from which Theorem \ref{cliftongen} will follow.

\begin{theorem}\label{cliftongen+} Suppose that $(G,H)$ is a reductive model and $S$ is a Lie group such that $\dim(S)=\dim(G/H)$. If $\rho:\mathfrak{s}\to\mathfrak{g}$ is $\mathrm{g}$-steady and, for some $\xi\in\mathfrak{s}$, there exists a half-fine pair for $\rho_\mathfrak{m}(\xi)\in\mathfrak{m}$ of the form $(\rho_\mathfrak{h}(\xi),Z)$, then $(S\times H,\omega_\rho)$ is geodesically complete but not exp-complete.\end{theorem}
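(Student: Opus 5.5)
Geodesic completeness is immediate from Lemma \ref{slowandsteady}, since $\rho$ is assumed $\mathrm{g}$-steady. The work is to show that $(S\times H,\omega_\rho)$ is not exp-complete. The plan is to mimic the proof of Theorem \ref{halfnotcomplete}, with the homogeneous connection itself playing the role of the deformed connection. No actual deformation is needed, because the curve is already developed as an exponential with the $\mathfrak{h}$-shift built in.

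First, let $X:=\rho_\mathfrak{m}(\xi)$ and $Y:=\rho_\mathfrak{h}(\xi)$, so that $\rho(\xi)=X+Y$. Consider the curve $\gamma(t):=(\exp(t\xi),e)\in S\times H$. Along it, $\MC{S}(\dot\gamma)=\xi$ and the $H$-component is constant at $e$. Hence
\[\omega_\rho(\dot\gamma)=\rho(\xi)=X+Y,\]
which is constant. Therefore $\gamma(t)=\exp(t\omega_\rho^{-1}(X+Y))(e,e)$ is an exponential curve defined for all $t\in\mathbb{R}$, and its development is $t\mapsto\exp(t(X+Y))$.

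Second, use the half-fine relation $[Z-Y,X+Z]=\lambda(X+Z)$ with $\lambda>0$. As computed before Theorem \ref{halfnotcomplete},
\[\exp\big(a(X+Z)+s(Z-Y)\big)=\exp\!\Big(a\tfrac{e^{\lambda s}-1}{\lambda s}(X+Z)\Big)\exp\big(s(Z-Y)\big).\]
Taking $a=t$ and $s=-t$, this gives
\[\exp(t(X+Y))=\exp\!\Big(\tfrac{1-e^{-\lambda t}}{\lambda}(X+Z)\Big)\exp\big(t(Y-Z)\big).\]
Here $\exp(t(Y-Z))\in H$, and $\sigma(t):=\tfrac{1-e^{-\lambda t}}{\lambda}$ maps $\mathbb{R}$ diffeomorphically onto $(-\infty,\tfrac1\lambda)$.

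Now define $\beta(\sigma(t)):=\gamma(t)\exp(-t(Y-Z))$. Its development is $t\mapsto\exp(\sigma(t)(X+Z))$, by the equivariance rule $(\gamma h)_G=h(0)^{-1}\gamma_G h$ used in the dev-completeness discussion. Since developments determine antidevelopments uniquely from the initial point, $\beta(u)=\exp(u\,\omega_\rho^{-1}(X+Z))(e,e)$ for all $u\in(-\infty,\tfrac1\lambda)$.

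It remains to show that $\beta$ cannot be extended past $u=\tfrac1\lambda$; this is the main obstacle. If the flow were defined at $u=\tfrac1\lambda$, then $\beta(u)$ would converge in $S\times H$ as $u\to\tfrac1\lambda$. Then $\gamma(t)=\beta(\sigma(t))\exp(t(Y-Z))$ would have $S$-component converging as $t\to+\infty$. But the $S$-component of $\gamma(t)$ is $\exp(t\xi)$, and right multiplication by $H$ does not change the $S$-factor in the trivial bundle $S\times H$. So it suffices that $\exp(t\xi)$ does not converge in $S$.

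To get this, note that the trammeling hypothesis says $q_{{}_H}(\exp(tX))$ does not converge, so $X\neq0$ and hence $\xi\neq0$. A nontrivial one-parameter subgroup of $S$ converges as $t\to+\infty$ only if it is constant. Indeed, if $\exp(t\xi)\to p$, then $\exp((t+1)\xi)\to p$ as well, so $\exp(\xi)p=p$ and $\exp(\xi)=e$. Repeating with any $s$ in place of $1$ forces $\exp(s\xi)=e$ for all $s$, hence $\xi=0$.

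Therefore the integral curve of $\omega_\rho^{-1}(X+Z)$ through $(e,e)$ is defined only for $u<\tfrac1\lambda$, so $(S\times H,\omega_\rho)$ is not exp-complete.
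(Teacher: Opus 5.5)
Your proof is correct and follows the paper's argument. You realize $t\mapsto(\exp(t\xi),e)$ as the exponential curve for $\rho(\xi)$, factor $\exp(t\rho(\xi))$ using the $\mathfrak{aff}(1)$ identity from the half-fine pair, and then right-translate and reparametrize to get the integral curve of $\omega_\rho^{-1}(\rho_\mathfrak{m}(\xi)+Z)$ on $(-\infty,\tfrac{1}{\lambda})$. Two of your details improve on the paper: your non-extendability argument ($\xi\neq0$, and a nontrivial one-parameter subgroup never converges) fills in a step the paper only asserts, and your reparametrized $H$-component $\exp(\sigma^{-1}(u)(Z-\rho_\mathfrak{h}(\xi)))$ is the correct version of the paper's displayed formula, which writes $\exp(t(Z-\rho_\mathfrak{h}(\xi)))$ there.
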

\begin{proof}Geodesic completeness follows from Lemma \ref{slowandsteady}, so we just need to prove that $(S\times H,\omega_\rho)$ is not exp-complete. To do this, note that $\omega_\rho(\MC{S}^{-1}(\xi))=\Ad_{h^{-1}}\rho(\xi)=\Ad_{h^{-1}}(\rho_\mathfrak{m}(\xi)+\rho_\mathfrak{h}(\xi))$ is constant along the flows of $\MC{S}^{-1}(\xi)$. Since left-invariant vector fields are complete on Lie groups, the corresponding exponential curves for the $(G,H)$-geometry are complete as well, but by assumption, $(\rho_\mathfrak{h}(\xi),Z)$ is a half-fine pair for $\rho_\mathfrak{m}(\xi)$, so
\[\exp(t\rho(\xi))=\exp\big(\tfrac{1-e^{-\lambda t}}{\lambda}(\rho_\mathfrak{m}(\xi)+Z)\big)\exp(t(\rho_\mathfrak{h}(\xi)-Z))\]
for some $\lambda>0$, hence
\[\exp(t\omega_\rho^{-1}(\rho_\mathfrak{m}(\xi)+Z))(e,e)=\left(\exp\left(-\tfrac{\log(1-t\lambda)}{\lambda}\xi\right),\exp(t(Z-\rho_\mathfrak{h}(\xi)))\right)\!,\]
which is only defined for $t\in(-\infty,\tfrac{1}{\lambda})$. In other words, $\omega_\rho^{-1}(\rho_\mathfrak{m}(\xi)+Z)$ is not complete, so $(S\times H,\omega_\rho)$ is not exp-complete.\mbox{\qedhere}\end{proof}

In order to prove Theorem \ref{cliftongen}, we therefore just need to show that, for each Lie group $S$ of dimension $\dim(S)>1$, we can find some map $\rho$ satisfying the above conditions with $\dim(S)$-dimensional affine geometry, which is modeled on $(\Aff(\dim(S)),\GLin_{\dim(S)}\mathbb{R})$. Fixing a linear isomorphism $\rho_\mathfrak{m}:\mathfrak{s}\to\mathbb{R}^{\dim(S)}$, let us choose an inner product $\mathrm{g}$ on $\mathfrak{aff}(\dim(S))$ that restricts to the usual dot product on $\mathbb{R}^{\dim(S)}$ and define $\rho_\mathfrak{h}:\mathfrak{s}\to\mathfrak{gl}_{\dim(S)}\mathbb{R}$ by
setting, for each $v\in\mathbb{R}^{\dim(S)}$, $\rho_\mathfrak{h}(\rho_\mathfrak{m}^{-1}(v))$ to be the $\dim(S)\times\dim(S)$-matrix whose top-left $2\times 2$ block is given by $\left[\begin{smallmatrix}0 & -v_2 \\ v_2 & v_1\end{smallmatrix}\right]$ and for which all other entries are $0$; when $\dim(S)=2$, this precisely corresponds to the deformation $\phi$ for the Clifton plane. Then, $\rho=\rho_\mathfrak{m}+\rho_\mathfrak{h}$ is $\mathrm{g}$-steady, since
\[\begin{bmatrix}v_1 & v_2 & \cdots & v_{\dim(S)}\end{bmatrix}\begin{bmatrix}0 & -v_2 & \cdots & 0 \\ v_2 & v_1 & \cdots & 0 \\ \vdots & \vdots & \ddots & \vdots \\ 0 & 0 & \cdots & 0\end{bmatrix}\begin{bmatrix}v_1 \\ v_2 \\ \vdots \\ v_{\dim(S)}\end{bmatrix}=0,\]
and for $X=3e_1+\sqrt{2}e_2\in\mathbb{R}^{\dim(S)}$ and $Z$ the matrix whose entries are $0$ except for the top-left $2\times 2$ block given by $\tfrac{-5\sqrt{2}}{8}\left[\begin{smallmatrix}\sqrt{2} & 1 \\ -2 & -\sqrt{2}\end{smallmatrix}\right]$, we have that $(\rho_\mathfrak{h}(\rho_\mathfrak{m}^{-1}(X)),Z)$ is a half-fine pair for $X$, just like for the Clifton plane. Thus, by Theorem \ref{cliftongen+}, we get a homogeneous affine geometry $(S\times\GLin_{\dim(S)}\mathbb{R},\omega_\rho)$ over $S$ that is geodesically complete but not exp-complete.

\bibliographystyle{plain}
\bibliography{completeness-refs}

\end{document}